\DeclareMathOperator{\id}{id}
\newcommand{\ZZ}{\mathbb{Z}}
\newcommand{\CC}{\mathbb{C}}
\newcommand{\cpr}{\vee}
\newcommand{\cpro}{\vee_{\Omega}}
\newcommand{\ostar}{\mathbin{\mathpalette\make@circled\ast}}
\newcommand{\make@circled}[2]{%
  \ooalign{$\m@th#1\smallbigcirc{#1}$\cr\hidewidth$\m@th#1#2$\hidewidth\cr}%
}
\newcommand{\smallbigcirc}[1]{%
  \vcenter{\hbox{\scalebox{0.77778}{$\m@th#1\bigcirc$}}}%
}
\renewcommand{\setminus}{\smallsetminus}
\renewcommand{\gg}{\mathfrak{g}}
\newcommand{\hh}{\mathfrak{h}}
\theoremstyle{plain}
\newtheorem{theorem}{Theorem}[section]
\newtheorem{prop}[theorem]{Proposition}
\newtheorem{lemma}[theorem]{Lemma}
\newtheorem{cor}[theorem]{Corollary}  
\newtheorem*{thm}{Theorem}
\theoremstyle{definition}
\newtheorem{definition}[theorem]{Definition}
\theoremstyle{remark}
\newtheorem{remark}[theorem]{Remark}
\numberwithin{equation}{section}
\begin{document}
\setlength{\parindent}{0.2cm}

\title{On the String Topology Coproduct for Lie Groups}

\author{Maximilian Stegemeyer}
\address{Max Planck Institute for Mathematics in the Sciences, Inselstra\ss{}e 22, 04103 Leipzig, Germany} 
\address{Mathematisches Institut, Universit\"at Leipzig, Augustusplatz 10, 04109 Leipzig, Germany}
\email{maximilian.stegemeyer@mis.mpg.de}
\date{\today}
\maketitle

\begin{abstract}
The free loop space of a Lie group is homeomorphic to the product of the Lie group itself and its based loop space.
We show that the coproduct on the homology of the free loop space that was introduced by Goresky and Hingston splits into the diagonal map on the group and a based coproduct on the homology of the based loop space.
This result implies that the coproduct is trivial for even-dimensional Lie groups.
Using results by Bott and Samelson, we show that the coproduct is trivial as well for a large family of simply connected Lie groups.
\end{abstract}

\tableofcontents

\section{Introduction}
In the last two decades the study of string topology structures on the homology or cohomology of the free loop space has received a great deal of attention.
The operation that has been studied the most is certainly the Chas-Sullivan product which was introduced by Chas and Sullivan in \cite{chas:1999}.
For an oriented closed manifold $M$ this is a product on the singular homology of the free loop space $\Lambda M$ of the form
$$ \wedge: \mathrm{H}_{i}(\Lambda M)\otimes \mathrm{H}_j(\Lambda M) \to \mathrm{H}_{i+j-n}(\Lambda M)   $$
where $n$ is the dimension of $M$.
%It is related to the intersection product on $M$ and the Pontrjagin product on the based loop space of $M$ (see \cite[Proposition 3.4]{chas:1999}).
In \cite{cohen:2008}, Cohen, Klein and Sullivan show that the Chas-Sullivan product is a homotopy invariant.

Going back to ideas by Sullivan \cite{sullivan:2004}, Goresky and Hingston \cite{goresky:2009} define a coproduct on the homology of the free loop space of an oriented closed $n$-dimensional manifold $M$ relative to the constant loops which takes the form
$$ \cpr : \mathrm{H}_i(\Lambda M, M) \to \mathrm{H}_{i+1-n}(\Lambda M\times \Lambda M,\Lambda M\times M\cup M\times \Lambda M) .   $$
As a dual operation, a product on the cohomology of the free loop space can be defined (see \cite[Section 9]{goresky:2009}).
This homology coproduct and cohomology product have been further studied by Hingston and Wahl  in \cite{hingston:2017}.
The homology coproduct is not a homotopy invariant as an example of Naef \cite{naef:2021} shows.
However, Hingston and Wahl show in \cite{hingston:2019} that the homology coproduct is invariant under homotopy equivalences that satisfy an additional assumption.

It is not surprising that explicit computations with these string topology operations can become very complicated.
There are some results, where the Chas-Sullivan product was computed explicitly.
Using integer coefficients the Chas-Sullivan product has been computed for spheres and complex projective spaces (see \cite{cohen:2003}) and complex Stiefel manifolds (see \cite{tamanoi:2006}).
With rational coefficients, the Chas-Sullivan product has furthermore been computed for complex and quaternionic projective spaces (see \cite{yang:2013}).
Hepworth has computed the Chas-Sullivan product with rational coefficients for the special orthogonal groups (see \cite{hepworth:2010}).

The Goresky-Hingston product in cohomology and the corresponding coproduct in homology have only been computed explicitly in a few instances.
In \cite{goresky:2009} the authors compute the Goresky-Hingston product for spheres of dimension greater than or equal to $3$. 
%They also examine some properties of this product for manifolds all whose geodesics are closed and of the same length. 
In \cite{hingston:2017} the homology coproduct is computed for odd-dimensional spheres.

The goal of this article is to study the homology coproduct for Lie groups.
The free loop space of a compact Lie group $G$ splits into the product
$$ \Lambda G\cong G\times \Omega_e G    $$
where $\Omega_e G$ is the space of loops based at the neutral element $e$.
Since the homology of $\Omega_e G$ with coefficients in a commutative ring $R$ is free (see \cite{bott:1956}), the homology of $\Lambda G$ is isomorphic to the tensor product $\mathrm{H}_{\bullet}(G)\otimes \mathrm{H}_{\bullet}(\Omega_e G)$.
The following main result of this article shows that the homology coproduct behaves well under this isomorphism.
This result should be thought of as an analogue to \cite[Theorem 1.1]{hepworth:2010}. There, Hepworth shows that for a Lie group $G$, the Chas-Sullivan ring is isomorphic to the tensor product of the intersection ring on the Lie group $G$ and the Pontryagin ring on the based loop space $\Omega_e G$.

\begin{thm}[Theorem \ref{theorem_splitting}]
Let $G$ be a compact Lie group of dimension $n$ and consider homology with coefficients in a commutative ring $R$.
Under the isomorphism $\mathrm{H}_{\bullet}(\Lambda G, G)\cong \mathrm{H}_{\bullet}(G)\otimes \mathrm{H}_{\bullet}(\Omega_e G,e)$ the coproduct $\cpr$ can be expressed by the tensor product of the map $d_*$ that is induced by the diagonal map $d: G\to G\times G$ and the based coproduct $\cpro$ up to a sign-correction.
More precisely, the following diagram commutes
$$
\begin{tikzcd}
   \bigoplus_{k+j=i} \mathrm{H}_{k}(G)\otimes \mathrm{H}_j(\Omega,e)  \arrow[d, "(-1)^{ni}d_*\otimes \cpro" ] \arrow[]{r}{\cong} &
    \mathrm{H}_{i}(\Lambda,G) \arrow[]{d}{\cpr}
    \\
    \bigoplus_{k+j=i} \mathrm{H}_{k}(G^2)\otimes \mathrm{H}_{j+1-n}(\Omega^2 ,\Omega\times e\cup e\times \Omega) \arrow[]{r}{\cong} 
    &
    \mathrm{H}_{i+1-n}(\Lambda^2,\Lambda\times G\cup G \times \Lambda )
    \end{tikzcd}
$$
where $\Omega = \Omega_e G$ and $\Lambda = \Lambda G$.
\end{thm}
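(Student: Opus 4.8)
The plan is to reduce the statement to two geometric observations about the trivialisation
$$\phi\colon \Lambda G \xrightarrow{\ \cong\ } G\times \Omega,\qquad \phi(\gamma)=\bigl(\gamma(0),\,\gamma(0)^{-1}\gamma\bigr),$$
whose inverse sends $(g,\alpha)$ to the loop $t\mapsto g\,\alpha(t)$ based at $g$. First note that $\phi$ carries the constant loops $G\subset\Lambda G$ onto $G\times\{e\}$, so it induces the isomorphism on relative homology appearing in the statement and is compatible with the Künneth decomposition $\mathrm{H}_\bullet(G\times\Omega,G\times e)\cong\mathrm{H}_\bullet(G)\otimes\mathrm{H}_\bullet(\Omega,e)$. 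I would begin by recalling the definition of $\cpr$ in the form used by Hingston and Wahl: on the self-intersection locus $F=\{(\gamma,s)\in\Lambda G\times I:\gamma(0)=\gamma(s)\}$ one pulls back the Thom class of the diagonal $\Delta\subset G\times G$ along the evaluation $\mathrm{ev}_{0,s}(\gamma,s)=(\gamma(0),\gamma(s))$, caps with it while integrating over $s\in I$, and pushes forward along the cut map $c(\gamma,s)=(\gamma|_{[0,s]},\gamma|_{[s,1]})$; the relative structure records that one half collapses to a constant loop when $s\in\{0,1\}$. The based coproduct $\cpro$ is then the verbatim based analogue, with $\Delta\subset G\times G$ replaced by the point $e\in G$, the evaluation by $\mathrm{ev}_\Omega(\alpha,s)=\alpha(s)$, and $c$ by the based cut map $c_\Omega(\alpha,s)=(\alpha|_{[0,s]},\alpha|_{[s,1]})$.

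The first, and decisive, observation is that the diagonal condition decouples into a condition on the $\Omega$-factor alone. Since $G$ is a compact Lie group it is orientable, and the difference map $\mu\colon G\times G\to G,\ (a,b)\mapsto a^{-1}b$, is a submersion with $\mu^{-1}(e)=\Delta$, so the Thom class of the diagonal is $\tau_\Delta=\mu^{*}\tau_e$ for the orientation class $\tau_e$ of $e\in G$. Writing $\gamma=g\,\alpha=\phi^{-1}(g,\alpha)$ and using $\alpha(0)=e$, one computes
$$\mu\bigl(\gamma(0),\gamma(s)\bigr)=\gamma(0)^{-1}\gamma(s)=\bigl(g\,\alpha(0)\bigr)^{-1}\bigl(g\,\alpha(s)\bigr)=\alpha(s)=\mathrm{ev}_\Omega(\alpha,s),$$
which is \emph{independent of $g$}: it factors through the projection $\mathrm{pr}\colon G\times\Omega\times I\to\Omega\times I$. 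Hence the pulled-back Thom class governing $\cpr$ equals $\mathrm{pr}^{*}$ of the Thom class governing $\cpro$, so under $\phi$ the self-intersection locus $F$ becomes $G\times F_\Omega$ with $F_\Omega=\{(\alpha,s):\alpha(s)=e\}$, and the $G$-direction is simply carried along.

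The second observation concerns the cut map. When $\alpha(s)=e$ both halves of $\gamma=g\,\alpha$ are loops based at $g$, with based parts $\alpha|_{[0,s]}$ and $\alpha|_{[s,1]}$; thus under $\phi$ the cut map becomes
$$(g,\alpha,s)\longmapsto\bigl((g,g),\,(\alpha|_{[0,s]},\alpha|_{[s,1]})\bigr)\in(G\times G)\times(\Omega\times\Omega),$$
that is, the diagonal $d$ on the group factor times the based cut map on the loop factor. Combining the two observations with the naturality of the cap product and the Künneth theorem, $\cpr$ decomposes as the diagonal-induced map $d_*$ on $\mathrm{H}_\bullet(G)$ tensored with $\cpro$ on $\mathrm{H}_\bullet(\Omega,e)$, which is the content of the square once the homeomorphism $(G\times\Omega)^2\cong(G\times G)\times(\Omega\times\Omega)$ is inserted.

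It remains to pin down the sign $(-1)^{ni}$, and this is where I expect the real work to lie. The sign is forced by the Koszul rule for the cross and cap products when the degree-$n$ Thom class is commuted past a class of total degree $i=k+j$, together with the reshuffle $G\times\Omega\times G\times\Omega\to(G\times G)\times(\Omega\times\Omega)$ that permutes the middle two factors in both source and target. Keeping these signs consistent requires fixing orientation conventions so that $\tau_\Delta=\mu^{*}\tau_e$ holds on the nose, and verifying that the relative tubular-neighbourhood structures at the endpoints $s\in\{0,1\}$ match under $\phi$. The geometric content is routine once set up, but the bookkeeping of the Künneth signs and the degree shift $+1-n$ is the main obstacle.
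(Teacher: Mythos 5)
Your geometric skeleton is the same as the paper's: the paper also works through the trivialisation $\Phi(g,\alpha)=g\alpha$, shows via the map $\chi(g,h)=(g,g^{-1}h)$ (your difference map $\mu$) that the Thom class of the diagonal pulls back to $\mathrm{pr}^*\tau_\Omega$, and checks that the cut map becomes $d\times\mathrm{cut}_\Omega$. But the one place you defer --- the sign --- is a genuine gap, and your diagnosis of where it comes from is wrong. Tracing the cap product gives $(1_G\times\tau_\Omega)\cap\bigl(X\times(Y\times[I])\bigr)=(-1)^{n|X|}\,X\times\bigl(\tau_\Omega\cap(Y\times[I])\bigr)$: the degree-$n$ Thom class is commuted past $X$ alone, not past a class of total degree $i$, so the Koszul bookkeeping produces $(-1)^{nk}$, a sign depending on the K\"unneth bidegree $(k,j)$ rather than on $i=k+j$. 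No choice of orientation conventions or care with the endpoint tubular structure will convert this into the uniform sign $(-1)^{ni}$ of the statement. The missing idea is Bott's theorem: $\mathrm{H}_{\bullet}(\Omega G,e)$ is free and concentrated in even degrees, so whenever $Y\neq 0$ the degree $j$ is even and $(-1)^{nk}=(-1)^{n(k+j)}=(-1)^{ni}$. This same theorem is also what justifies the K\"unneth isomorphisms over an arbitrary commutative ring $R$, which you invoke but do not justify; over general $R$ the K\"unneth map is an isomorphism here only because of this freeness.

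A second, smaller omission: your description of $\cpr$ pretends chains are supported on the figure-eight locus $F$, but in the Hingston--Wahl definition one caps on the neighbourhood $U_{\Lambda}=\{(\gamma,s):\mathrm{d}(\gamma(0),\gamma(s))<\epsilon\}$ and then applies the retraction $\mathrm{R}_{GH}\colon U_{\Lambda}\to F_{\Lambda}$, which closes up the two arcs by inserting the short geodesics $\overline{\gamma(s)\gamma(0)}$ and $\overline{\gamma(0)\gamma(s)}$. Compatibility of $\mathrm{R}_{GH}$ with the trivialisation is a real step, and it is exactly here (and in your first observation, where the pairs $(U_G,U_{G,\geq\epsilon_0})$ are defined by the distance function) that the metric enters: one must fix a \emph{left-invariant} metric on $G$, so that left translation is an isometry and $\overline{(g\gamma_0)(g\gamma_s)}=g\cdot\overline{\gamma_0\gamma_s}$; for a generic metric the retraction does not commute with the trivialisation and the middle squares of the diagram fail. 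With the left-invariant metric made explicit and the sign derived from Bott's evenness theorem as above, your argument closes up and coincides with the paper's proof.
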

With this result, one can prove directly that the coproduct is trivial for even-dimensional Lie groups.
Combining this with explicit cycles in the based loop space of a compact, simply connected Lie group of rank $r\geq 2$ we are able to show the following.

\begin{thm}[Theorem \ref{theorem_even_d} and Theorem \ref{theorem_rank}]Let $G$ be a compact Lie group.
If $G$ is even-dimensional or if $G$ is simply connected and of rank $r\geq 2$, then the homology coproduct is trivial.
\end{thm}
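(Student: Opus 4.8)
The plan is to deduce both statements from the splitting theorem, Theorem~\ref{theorem_splitting}, which reduces everything to the based coproduct $\cpro$. Since the square there commutes and its horizontal maps are isomorphisms, the identity $\cpr \cong (-1)^{ni} d_*\otimes\cpro$ shows that $\cpr$ vanishes as soon as $\cpro$ vanishes on $\mathrm{H}_{\bullet}(\Omega_e G,e)$, because $(d_*\otimes\cpro)(a\otimes b)=\pm d_*(a)\otimes\cpro(b)$. Thus in both cases it suffices to prove that the based coproduct $\cpro\colon \mathrm{H}_j(\Omega_e G,e)\to \mathrm{H}_{j+1-n}((\Omega_e G)^2,\ \Omega_e G\times e\cup e\times\Omega_e G)$ is zero. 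Throughout I will use that, by the reduced K\"unneth theorem together with the freeness of $\mathrm{H}_{\bullet}(\Omega_e G)$ from \cite{bott:1956}, the target decomposes as $\bigoplus_{p+q=j+1-n}\tilde{\mathrm H}_p(\Omega_e G)\otimes\tilde{\mathrm H}_q(\Omega_e G)$.

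For the even-dimensional case I would argue purely by parity. A compact Lie group satisfies $\Omega_e G\simeq \pi_1(G_0)\times\Omega\widetilde{G}_0$, where $G_0$ is the identity component, $\widetilde{G}_0$ its universal cover, and $\pi_1(G_0)$ is viewed as a discrete set; since the torus factors contribute nothing to the based loop space and $\mathrm{H}_{\bullet}(\Omega\widetilde{G}_0)$ is concentrated in even degrees by Bott, the homology $\mathrm{H}_{\bullet}(\Omega_e G;R)$ is concentrated in even degrees. Now let $n$ be even. If $\mathrm{H}_j(\Omega_e G)\neq 0$ then $j$ is even, so $j+1-n$ is odd; but a summand $\tilde{\mathrm H}_p\otimes\tilde{\mathrm H}_q$ of the target is nonzero only when $p$ and $q$ are both even, forcing $p+q$ even. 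Hence the target vanishes, $\cpro=0$, and therefore $\cpr=0$.

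For the rank condition, note first that for a simply connected compact $G$ one has $n=\sum_i(2m_i+1)\equiv r\pmod 2$, where $m_1,\dots,m_r$ are the exponents; thus even rank already falls under the even-dimensional case, and only odd rank $r\geq 3$, with $n$ odd, remains. Here the parity obstruction disappears and a genuine computation is needed. I would represent a generating set of the Pontryagin ring $\mathrm{H}_{\bullet}(\Omega_e G)$ by the explicit geometric cycles of Bott and Samelson, built by concatenation from based geodesic loops $t\mapsto\Exp(2\pi t\,\xi)$ attached to coroot $\mathrm{SU}(2)$-subgroups, and compute $\cpro$ directly from the cut construction: on such a family the locus of interior returns to $e$ is concentrated at the concatenation parameters, where the evaluation map is far from transverse to $\{e\}$. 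The contribution of such a forced return is an excess-intersection term, governed by the Euler class of the rank-$(n-1)$ quotient of the trivial bundle $\gg$ by the line spanned by the loop velocity, pulled back along the velocity map of the cycle into $\gg\setminus 0\simeq S^{n-1}$. This Euler class equals $\chi(S^{n-1})$ times the pullback of the top class; it vanishes when $n$ is even, recovering the previous case, and, crucially, it vanishes when $r\geq 2$ because the velocities of the Bott--Samelson loops lie in the Lie algebra of a proper subgroup, so the velocity map factors through a proper subsphere $S^{k}\hookrightarrow S^{n-1}$ with $k<n-1$. For $G=\mathrm{SU}(2)=S^3$ no such room exists, which is precisely why the coproduct is nonzero there.

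The main obstacle is this rank $\geq 2$ case: making the excess-intersection computation rigorous---identifying the forced-return contribution with the Euler-class pullback above, controlling the remaining interior returns of the generating loops so that a co-Leibniz argument with respect to the Pontryagin product reduces the computation to the concatenation points, and verifying that the Bott--Samelson velocities are confined to a proper subspace of $\gg$ whenever $r\geq 2$. The even-dimensional case, by contrast, is immediate from the parity bookkeeping once the splitting theorem is in hand.
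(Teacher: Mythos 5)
Your even-dimensional half is correct and is essentially the paper's own argument: by Theorem \ref{theorem_splitting} it suffices to kill $\cpro$, and since $\mathrm{H}_{\bullet}(\Omega_e G,e)$ is free and concentrated in even degrees (Bott), the degree shift $1-n$ is odd for $n$ even, so source and target of $\cpro$ are never simultaneously nonzero. (Your detour through identity components is redundant, as the paper assumes $G$ connected, but harmless.)

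The rank $r\geq 2$ half, however, has a genuine gap, and in two ways. First, it is not a proof: the entire content is deferred to an unestablished ``excess-intersection term governed by the Euler class'' of a quotient bundle, with no formula relating such a term to the Goresky--Hingston coproduct ever stated or proved --- you yourself flag this as the main obstacle, so the argument terminates exactly where the work begins. Second, the geometric premise behind your plan is off. The interior returns of a Bott--Samelson loop to its basepoint are \emph{not} concentrated at the concatenation parameters: the concatenation points lie in $\exp$ of the singular planes $\overline{p_i}$, which are generically far from $e$. A loop $\gamma = k\cdot\mathrm{concat}\big[\exp(c_0),\, g_1\exp(c_1)g_1^{-1},\ldots\big]$ returns to $\gamma(0)=k$ at an interior time $s$ precisely when some polygon satisfies $c_i(s)\in\mathcal{F}$, the unit lattice in $\mathfrak{t}$. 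Since $\mathcal{F}$ is a discrete subset of $\mathfrak{t}\cong\mathbb{R}^r$ and the polygons are one-dimensional, for $r\geq 2$ one can choose the $(c_0,\ldots,c_m)$ to be lattice-nonintersecting, so there are \emph{no} interior returns at all; no forced-return contribution exists to be computed, and no Euler-class machinery is needed. This is exactly how the paper proceeds: Proposition \ref{prop_k_cycl_int} shows every class $\Phi_*([x_i]\times P_*)$ (these generate $\mathrm{H}_{\bullet}(\Lambda G, G)$) has basepoint intersection multiplicity $1$, and then the Hingston--Wahl criterion \cite[Theorem 3.10 (C)]{hingston:2017}, quoted as Proposition \ref{prop_int_triv}, gives $\cpr = 0$ directly on the free loop space --- note the paper never asserts $\cpro = 0$ in this case, whereas your reduction via Theorem \ref{theorem_splitting} would additionally require the vanishing of the based coproduct. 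Your parity observation $n\equiv r\pmod 2$ (reducing to odd rank $\geq 3$) is correct but unnecessary once the multiplicity argument is available, since that argument only uses $r\geq 2$ to push the polygons off the lattice --- which is also the precise reason the rank-$1$ case $\mathrm{SU}(2)=\mathbb{S}^3$, where the coproduct is nontrivial, escapes.
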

This article is organized as follows.
In Section \ref{sec2}, we review some facts about loop spaces and define the based homology coproduct as well as the free homology coproduct. We also show a compatibility statement between these two coproducts.
The goal of Section \ref{sec3} is to show that the splitting of the free loop space of a compact Lie group is respected by the homology coproduct.
We also show that the dual cohomology product behaves nicely under this splitting.
In the brief Section \ref{sec_even}, we conclude that the coproduct is trivial for even-dimensional Lie groups.
Finally, in Section \ref{sec_rank} we define explicit cycles of the based loop space of a Lie group that were first introduced by Bott and Samelson \cite{bott:1958a} to prove that the free homology coproduct vanishes for simply connected compact Lie groups of rank $r\geq 2$.

\emph{In this article all manifolds, are assumed to be smooth and connected and all Riemannian metrics are assumed to be smooth. In particular, all Lie groups are assumed to be connected.}

\section{Free and Based Coproduct} \label{sec2}
In this section, we introduce the based coproduct and the free coproduct.
We will then show that these two coproducts are compatible.
Furthermore, the dual cohomology products are introduced.

In the following, let $M$ be an oriented closed $n$-dimensional Riemannian manifold.
We consider absolutely continuous curves in $M$ (see \cite[Definition 2.3.1]{klingenberg:1995}).
Let $$  PM = \big\{ \gamma : I\to M\,|\, \gamma \,\,\text{absolutely continuous}, \,\, \int_0^1 |\Dot{\gamma}(t)|^2 \,\mathrm{d}t < \infty \big\}   $$
be the set of absolutely continuous curves in $M$ with square integrable derivative, where $I = [0,1]$ is the unit interval.
This set can be given a topology and a differentiable structure that make it a Hilbert manifold (see \cite[Theorem 2.3.12]{klingenberg:1995}). 
Note that $PM$ with this topology is homotopy equivalent to the space
$$   C^0 (I, M) = \{ \gamma : I\to M\,|\, \gamma \,\,\,\text{continuous}\}    $$
of continuous paths in $M$ with the compact-open topology (see \cite[Theorem 1.2.10]{klingenberg:78}).

We consider the following submanifolds of $PM$:
The free loop space of $M$ is defined to be
$$  \Lambda M = \{\gamma \in PM \,|\, \gamma(0 ) = \gamma(1)\}    $$
and for a fixed point $p_0\in M$, the based loop space of $M$ in $p_0$ is
$$  \Omega_{p_0}M = \{ \gamma \in PM \,|\,  \gamma(0) = \gamma(1) = p_0\}    .   $$
If it is clear what the basepoint of $\Omega_{p_0}M$ is, we may suppress the index $p_0$ from the notation.
If the manifold in question is clear from the context, we will also write $\Omega$ and $\Lambda$ for $\Omega_{p_0}M$ and $\Lambda M$, respectively.
Note that the trivial loops in $M$ form a submanifold of $\Lambda M$ which is diffeomorphic to $M$ (see \cite[Proposition 1.4.6]{klingenberg:78}).

On the path space $PM$ we consider the function
 $$  \mathcal{L}: PM \to [0,\infty),\qquad  \mathcal{L}(\gamma) = \sqrt{   \int_0^1 |\Dot{\gamma}(t) |^2 \mathrm{d}t   }  $$
which is the square root of the energy functional and which is well-defined by definition of $PM$. 
The energy functional is a continuous function on $PM$ (see \cite[Theorem 2.3.20]{klingenberg:1995}), hence the function $\mathcal{L}$ is continuous as well.

Assume that we have fixed a basepoint $p_0\in M$ and choose an $\epsilon > 0$ smaller than the injectivity radius of $M$.
In order to define the based coproduct and the free coproduct, we need to make the following preparations. 
For the definition of the free coproduct, we closely follow \cite[Section 1.5] {hingston:2017}.
Fix a commutative ring $R$ and consider homology and cohomology with coefficients in $R$.

First, let $\Delta M$ be the diagonal in $M\times M$. 
The diagonal has a tubular neighborhood in $M\times M$ which can be chosen as
$$   U_M = \{ (p,q)\in M\times M \,|\, \mathrm{d}(p,q) < \epsilon \}    $$
where $\mathrm{d}$ is the distance function on $M\times M$ induced by the Riemannian metric on $M$.
This choice of $U_M$ is made as in \cite[Section 1.3]{hingston:2017}.
Choose $\epsilon_0> 0$ such that $\epsilon_0<\epsilon$ and define
$$  U_{M,\geq \epsilon_0} = \{(p,q)\in U_M\,|\, \mathrm{d}(p,q)\geq \epsilon_0\} .    $$

A tubular neighborhood of the diagonal in $M\times M$ is homeomorphic to the normal bundle of $\Delta M\hookrightarrow M\times M$ which itself is isomorphic to the tangent bundle of $M$.
Consequently, the pair $(U_M,U_{M,\geq\epsilon_0})$ is homeomorphic to the pair $(TM^{<\epsilon},TM^{<\epsilon}_{\geq\epsilon_0})$ 
where $TM^{<\epsilon}$ is the open disk bundle
$$  TM^{<\epsilon} = \{ v\in TM \,|\, |v|<\epsilon\}   $$
and $TM^{<\epsilon}_{\geq\epsilon_0}$ is the fiber bundle
$$  TM^{<\epsilon}_{\geq\epsilon_0} = \{v\in TM^{<\epsilon}\,|\, |v|\geq \epsilon_0\} .   $$
The Thom class in $\mathrm{H}^n(TM,TM\setminus M)$ that is defined by the orientation of $M$ induces a class
in $\mathrm{H}^n(TM^{<\epsilon},TM^{<\epsilon}_{\geq\epsilon_0})$ (see \cite[Section 1.3]{hingston:2017})
and therefore we obtain a class
$$ \tau_M \in \mathrm{H}^n (U_M,U_{M,\geq \epsilon_0}) .   $$

Consider the open ball $$ B_{p_0} = \{ q\in M\,|\, \mathrm{d}(p_0,q) < \epsilon\}  \subseteq M    $$
and the inclusion $\iota : B_{p_0} \hookrightarrow U_M$ given by 
$  \iota(q) = (p_0,q) .   $
If we define
$$    B_{p_0,\geq \epsilon_0} = \{ q\in B_{p_0}\,|\, \mathrm{d}(p_0,q)\geq \epsilon_0\}    $$
then the inclusion $\iota$ can be understood as a map of pairs $\iota : (B_{p_0},B_{p_0,\geq \epsilon_0}) \hookrightarrow (U_M,U_{M,\geq \epsilon_0})$.

Clearly, we have
$$  \mathrm{H}^n(  B_{p_0},B_{p_0,\geq\epsilon_0}) \cong \mathrm{H}^{n}(\mathbb{D}^n,\mathbb{S}^{n-1}) \cong R.  $$
Furthermore, under the identification $U_M \cong TM$, one observes that the inclusion $\iota : B_{p_0}\hookrightarrow U_M$ corresponds to the inclusion of the fiber $T_{p_0}M \hookrightarrow TM$.

Recall that the Thom class $\tau$ of an orientable vector bundle $E\to B$ of rank $k$ has the following property. Let $b\in B$ be a point and denote by $E_b$ the fiber over $b$. If we consider the inclusion $i:(E_b,E_b\setminus\{0\})\hookrightarrow (E,E\setminus B)$ then the pull back $i^*\tau\in \mathrm{H}^k(E_b,E_b\setminus \{0\})$ is the generator induced by the orientation of $E_b$.
Hence, in our particular situation one obtains the following.

\begin{lemma} \label{lemma_generators}
The generator $\tau_{0}$ of $\mathrm{H}^n(B_{p_0},B_{p_0,\geq\epsilon_0})$ which is induced by the orientation of $M$ and the class $\tau_M$ satisfy
$$  \iota^* \tau_M  = \tau_0 .    $$
\end{lemma}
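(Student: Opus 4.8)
The plan is to reduce the statement to the restriction property of the Thom class recalled immediately before the lemma, applied to the tangent bundle $TM \to M$ over the single point $p_0$. The only genuine content is to verify that the identifications used to define $\tau_M$ and $\tau_0$ are compatible both with the fiber-inclusion maps and with the orientations, so that the abstract property transports to the concrete pairs $(U_M, U_{M,\geq\epsilon_0})$ and $(B_{p_0}, B_{p_0,\geq\epsilon_0})$.

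First I would fix the tubular neighborhood homeomorphism $U_M \cong TM^{<\epsilon}$ explicitly as $(p,q) \mapsto \exp_p^{-1}(q)$, which is well defined since $\epsilon$ is below the injectivity radius. Under this map $U_{M,\geq\epsilon_0}$ is carried to $TM^{<\epsilon}_{\geq\epsilon_0}$, and the inclusion $\iota(q) = (p_0,q)$ corresponds precisely to the inclusion of the fiber disk $(T_{p_0}M)^{<\epsilon} \hookrightarrow TM^{<\epsilon}$, with $q \mapsto \exp_{p_0}^{-1}(q)$ identifying $B_{p_0}$ with $(T_{p_0}M)^{<\epsilon}$ and $B_{p_0,\geq\epsilon_0}$ with the corresponding annular region $(T_{p_0}M)^{<\epsilon}_{\geq\epsilon_0}$. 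This is exactly the correspondence already asserted in the paragraph preceding the lemma.

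Next I would assemble the naturality square. The class $\tau_M$ is obtained from the Thom class $\tau \in \mathrm{H}^n(TM, TM \setminus M)$ through the isomorphism $\mathrm{H}^n(TM, TM\setminus M) \cong \mathrm{H}^n(TM^{<\epsilon}, TM^{<\epsilon}_{\geq\epsilon_0})$ induced by the inclusion of pairs; likewise $\tau_0$ corresponds, on the fiber, to the orientation generator of $\mathrm{H}^n(T_{p_0}M, T_{p_0}M \setminus \{0\})$ under the analogous fiberwise isomorphism onto $\mathrm{H}^n((T_{p_0}M)^{<\epsilon}, (T_{p_0}M)^{<\epsilon}_{\geq\epsilon_0})$. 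Since the fiber inclusion $i:(T_{p_0}M, T_{p_0}M \setminus \{0\}) \hookrightarrow (TM, TM \setminus M)$ restricts to the disk-bundle fiber inclusion, the restriction maps $\iota^*$ and $i^*$ fit into a commuting square having these two isomorphisms as vertical arrows. Chasing $\tau$ around the square, the recalled property identifies $i^*\tau$ with the orientation generator of the fiber, and transporting through the isomorphisms then yields $\iota^*\tau_M = \tau_0$.

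The one point requiring genuine care — and which I expect to be the main obstacle — is the orientation bookkeeping: I must confirm that the orientation of $T_{p_0}M$ producing the generator delivered by the Thom class restriction property agrees with the one that defines $\tau_0$ via the orientation of $M$. This holds because $\exp_{p_0}$ has derivative the identity at the origin, hence is orientation-preserving between $(T_{p_0}M)^{<\epsilon}$ with its linear orientation and $B_{p_0}$ with the orientation restricted from $M$; consequently the generator $\tau_0$ of $\mathrm{H}^n(B_{p_0}, B_{p_0,\geq\epsilon_0})$ fixed by the orientation of $M$ matches the fiber orientation generator. Once this compatibility is recorded, the commuting square closes the argument.
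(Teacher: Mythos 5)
Your proposal is correct and follows exactly the route the paper intends: the paper states the lemma without proof, as an immediate consequence of the recalled fiber-restriction property of the Thom class together with the identification $U_M \cong TM^{<\epsilon}$ under which $\iota$ becomes the fiber inclusion $T_{p_0}M^{<\epsilon} \hookrightarrow TM^{<\epsilon}$. Your additional verifications --- the explicit tubular map $(p,q)\mapsto \exp_p^{-1}(q)$, the naturality square, and the orientation check via $d(\exp_{p_0})_0 = \id$ --- are precisely the details the paper leaves implicit, and they are all sound.
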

Furthermore, define the spaces
\begin{eqnarray*}
  U_{\Lambda} &=& \{ (\gamma,s) \in \Lambda M\times I\,|\, \mathrm{d}(\gamma(0),\gamma(s))< \epsilon\}  \qquad \text{and} \\
  U_{\Lambda,\geq\epsilon_0} & = &
  \{ (\gamma,s)\in U_{\Lambda}\,|\, \mathrm{d}(\gamma(0),\gamma(s))\geq \epsilon_0\}
\end{eqnarray*}
and their based counterparts
\begin{eqnarray*}
  U_{\Omega} &=&  \{(\gamma,s) \in \Omega M \times I\,|\, \mathrm{d}(p_0,\gamma(s)) < \epsilon\} \qquad \text{and}
  \\  U_{\Omega,\geq \epsilon_0} & = & \{(\gamma,s)\in U_{\Omega}\,|\, \mathrm{d}(p_0,\gamma(s)) \geq \epsilon_0\}
  .    
\end{eqnarray*}
Consider the evaluation maps $\mathrm{ev}_{\Lambda}: \Lambda\times I\to M\times M$ and $\mathrm{ev}_{\Omega}: \Omega\times I\to M$ given by $$ \mathrm{ev}_{\Lambda}(\gamma,s) = (\gamma(0),\gamma(s)) \qquad \text{and} \qquad \mathrm{ev}_{\Omega}(\sigma,t) = \sigma(t)$$
for $\gamma\in \Lambda M$, $\sigma\in \Omega_{p_0}M$ and $s,t\in I$.
These maps restrict to maps of pairs
\begin{eqnarray*}
  & &\mathrm{ev}_{\Lambda} : (U_{\Lambda},U_{\Lambda,\geq \epsilon_0})  \to (U_M, U_{M,\geq \epsilon_0})\qquad \text{and}   \\   & & \mathrm{ev}_{\Omega}: (U_{\Omega},U_{\Omega,\geq\epsilon_0}) \to (B_{p_0},B_{p_0,\geq\epsilon_0})  .
\end{eqnarray*}
Therefore we can define the classes
$$  \tau_{\Lambda} = \mathrm{ev}_{\Lambda}^* \tau_M \in \mathrm{H}^n(U_{\Lambda},U_{\Lambda,\geq\epsilon_0}) \qquad \text{and} \qquad \tau_{\Omega} = \mathrm{ev}_{\Omega}^*\tau_0 \in \mathrm{H}^n(U_{\Omega},U_{\Omega,\geq \epsilon_0} ).     $$

\begin{lemma} \label{lemma_thom}
The classes $\tau_{\Lambda}$ and $\tau_{\Omega}$ satisfy $j^*\tau_{\Lambda} = \tau_{\Omega}$ where $j: (U_{\Omega},U_{\Omega,\geq\epsilon_0})\hookrightarrow (U_{\Lambda},U_{\Lambda,\geq\epsilon_0})$ is the inclusion.
\end{lemma}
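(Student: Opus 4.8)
The plan is to deduce the identity $j^{*}\tau_{\Lambda}=\tau_{\Omega}$ purely from functoriality of the cohomology pullback, by exhibiting a commutative square of maps of pairs that connects the two evaluation maps through the inclusions $j$ and $\iota$, and then invoking Lemma \ref{lemma_generators}. Concretely, I would aim to prove that the diagram
$$
\begin{tikzcd}
(U_{\Omega},U_{\Omega,\geq\epsilon_0}) \arrow[r, "\mathrm{ev}_{\Omega}"] \arrow[d, "j"'] & (B_{p_0},B_{p_0,\geq\epsilon_0}) \arrow[d, "\iota"] \\
(U_{\Lambda},U_{\Lambda,\geq\epsilon_0}) \arrow[r, "\mathrm{ev}_{\Lambda}"'] & (U_M,U_{M,\geq\epsilon_0})
\end{tikzcd}
$$
commutes, since then
$$
j^{*}\tau_{\Lambda}=j^{*}\mathrm{ev}_{\Lambda}^{*}\tau_M=(\mathrm{ev}_{\Lambda}\circ j)^{*}\tau_M=(\iota\circ \mathrm{ev}_{\Omega})^{*}\tau_M=\mathrm{ev}_{\Omega}^{*}\iota^{*}\tau_M=\mathrm{ev}_{\Omega}^{*}\tau_0=\tau_{\Omega},
$$
where the penultimate equality is exactly the content of Lemma \ref{lemma_generators}.

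The remaining work is bookkeeping with the pair structure, which I would organize in two steps. First I would check that $j$ really is a map of pairs, i.e.\ that $U_{\Omega}\subseteq U_{\Lambda}$ and $U_{\Omega,\geq\epsilon_0}\subseteq U_{\Lambda,\geq\epsilon_0}$. This follows because a based loop $\gamma\in\Omega_{p_0}M$ satisfies $\gamma(0)=p_0$, so that $\mathrm{d}(\gamma(0),\gamma(s))=\mathrm{d}(p_0,\gamma(s))$; hence the defining inequalities for $U_{\Omega}$ and $U_{\Lambda}$ (respectively their $\geq\epsilon_0$ subspaces) coincide on based loops, and in fact $U_{\Omega}=U_{\Lambda}\cap(\Omega\times I)$. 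Second I would verify commutativity of the square by evaluating on an arbitrary point $(\gamma,s)\in U_{\Omega}$: going right then down gives $\iota(\mathrm{ev}_{\Omega}(\gamma,s))=\iota(\gamma(s))=(p_0,\gamma(s))$, while going down then right gives $\mathrm{ev}_{\Lambda}(j(\gamma,s))=(\gamma(0),\gamma(s))=(p_0,\gamma(s))$, again using $\gamma(0)=p_0$; the two agree. Along the way I would confirm that $\mathrm{ev}_{\Omega}$, $\iota$ and $\mathrm{ev}_{\Lambda}$ each respect the $\geq\epsilon_0$ subspaces, which is immediate from their definitions.

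I do not expect any genuine obstacle here: the statement is a compatibility between the free and based Thom classes, and once the correct commutative square of pairs is written down, the result is formal. The only point requiring mild care is ensuring that every map in the square is a well-defined map of pairs, so that the functoriality step is legitimate; this is where the identity $\gamma(0)=p_0$ for based loops is used repeatedly and is the sole geometric input beyond Lemma \ref{lemma_generators}.
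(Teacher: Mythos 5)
Your proposal is correct and follows essentially the same route as the paper: the identity $\mathrm{ev}_{\Lambda}\circ j = \iota\circ \mathrm{ev}_{\Omega}$ combined with functoriality of pullback and Lemma \ref{lemma_generators} is exactly the paper's argument, and your extra verification that all maps respect the pair structure is just a more explicit version of the same bookkeeping.
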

\begin{proof}
We have
$$  j^*\tau_{\Lambda} = j^* \mathrm{ev}_{\Lambda}^* \tau_M = \mathrm{ev}_{\Omega}^*\iota^*\tau_M = \mathrm{ev}_{\Omega}^* \tau_0 = \tau_{\Omega}     $$
where we used Lemma \ref{lemma_generators} and the fact that $\mathrm{ev}_{\Lambda}\circ j = \iota \circ \mathrm{ev}_{\Omega}$ with $\iota$ as above.
\end{proof}

Consider the spaces
\begin{eqnarray*}
   & & F_{\Lambda} = \{(\gamma,s)\in \Lambda M\times I\,|\,\gamma(s) = \gamma(0)\}   \qquad \text{and} \\ & & F_{\Omega} = \{ (\gamma,s)\in \Omega M\times I\,|\,\gamma(s) = p_0\} .       
\end{eqnarray*}

We now want to define a retraction map $\mathrm{R}_{GH}: U_{\Lambda}\to F_{\Lambda}$.
Let $\gamma\in PM$ be a path and $s\in I$.
In order to make the restriction $\gamma|_{[0,s]}$ a well-defined element of $PM$, we introduce the map $\rho_s : PM \to PM $ given by
$$   \rho_s (\gamma)  (t)    =   \gamma(st)      \qquad \text{for} \,\,\, \gamma \in PM . $$
Similarly, define $\mu_s :PM\to PM$ by
$$ \mu_s (\gamma)(t) = \gamma(s + (1-s)t)\qquad \text{for}\,\,\, \gamma\in PM     $$
The path $\mu_s(\gamma)$ is the restriction of $\gamma$ to the interval $[s,1]$ combined with a reparametrization to make it an element of $PM$.
Furthermore, for a fixed $s\in I$ and two paths $\gamma,\delta\in PM$ with $\gamma(1)=\delta(0)$, we denote by $\mathrm{concat}_s(\gamma,\delta)$ the concatenation of these two paths such that the path $\gamma$ is run through during the interval $[0,s]$ and the path $\delta$ is run through during the interval $[s,1]$.

Given two paths $\gamma,\delta \in PM$ such that $\gamma(1)=\delta(0)$, we define their \textit{optimal concatenation} to be the path
$$  \gamma * \delta =   \mathrm{concat}_{t_*}(\gamma,\delta) ,\qquad \text{where}\,\,\,  t_* = \frac{\mathcal{L}(\gamma)}{\mathcal{L}(\gamma)+\mathcal{L}(\delta)}   $$
if one of $\gamma$ or $\delta$ has positive length.
If $\mathcal{L}(\gamma)=\mathcal{L}(\delta) = 0$, then $\gamma=\delta = p_0$ is a trivial loop at some point $p_0$ in $M$ and we set $\gamma *\delta = p_0$.
For two points $p,q\in M$ with $\mathrm{d}(p,q) < \epsilon$, the path $\overline{pq}\in PM$ is chosen to be the unique distance-minimizing geodesic connecting these two points parametrized on the unit interval $I$.

Using these definitions, we define $\mathrm{R}_{GH}:U_{\Lambda}\to F_{\Lambda}$ by
$$  \mathrm{R}_{GH} (\gamma,s) = \big( \mathrm{concat}_s \big( \rho_s(\gamma) * \overline{\gamma(s)\gamma(0)} \,, \,\,\overline{\gamma(0)\gamma(s)}* \mu_s(\gamma) \big)\,,\,s\big)     $$
for $(\gamma,s)\in U_{\Lambda}$. One checks that this indeed a continuous map.

If we restrict $\mathrm{R}_{GH}$ to $U_{\Omega}\subseteq U_{\Lambda}$, we obtain a map $\mathrm{R}_{GH}:U_{\Omega}\to F_{\Omega}$. Finally, let $\mathrm{cut}: F_{\Lambda}\to \Lambda M\times \Lambda M$ be the map
$$ \mathrm{cut}(\gamma,s) = (\rho_s(\gamma),\mu_s(\gamma)) \qquad \text{for} \,\,\, (\gamma,s)\in F_{\Lambda} . $$
This restricts to a map $\mathrm{cut}: F_{\Omega} \to \Omega_{p_0}M \times \Omega_{p_0}M $.

Let $[I]$ be a generator of $\mathrm{H}_1(I,\partial I)$.
If $p_0\in M$ is the base point of $M$, then by a slight abuse of notation, we shall write $p_0$ for the set $\{p_0\}\subseteq \Omega M$ that consists of the trivial loop at $p_0$.

\begin{definition}
Let $M$ be an oriented closed $n$-dimensional Riemannian manifold with base point $p_0\in M$.
\begin{enumerate}
    \item The \textit{based homology coproduct} is defined as
    the composition
    \begin{eqnarray*} 
     \cpro : \mathrm{H}_{\bullet}(\Omega,p_0) &\xrightarrow{\times [I]}& \mathrm{H}_{\bullet+1}(\Omega\times I,\Omega\times\partial I \cup p_0\times I)
     \\ &\xrightarrow{\tau_{\Omega}\cap }& \mathrm{H}_{\bullet+1-n}(U_{\Omega} , \Omega\times\partial I  \cup p_0\times I) \\
    &\xrightarrow{(\mathrm{R}_{GH})_*}&
    \mathrm{H}_{\bullet+1-n}(F_{\Omega}  , \Omega\times\partial I \cup p_0\times I ) \\
    &\xrightarrow{(\mathrm{cut})_*}& \mathrm{H}_{\bullet +1-n}(\Omega\times\Omega  , \Omega\times p_0\cup p_0\times \Omega). 
\end{eqnarray*}
\item The \textit{free homology coproduct} is defined as the composition
 \begin{eqnarray*} 
     \cpr: \mathrm{H}_{\bullet}(\Lambda,M) &\xrightarrow{\times [I]}& \mathrm{H}_{\bullet+1}(\Lambda\times I, \Lambda\times\partial I\cup M\times I)
     \\ &\xrightarrow{\tau_{\Lambda}\cap }& \mathrm{H}_{\bullet+1-n}(U_{\Lambda}, \Lambda\times\partial I\cup M\times I) \\
    &\xrightarrow{(\mathrm{R}_{GH})_*}&
    \mathrm{H}_{\bullet+1-n}(F_{\Lambda}, \Lambda\times\partial I\cup M\times I) \\
    &\xrightarrow{(\mathrm{cut})_*}& \mathrm{H}_{\bullet +1-n}(\Lambda\times\Lambda, \Lambda\times M\cup M\times \Lambda). 
\end{eqnarray*}
\end{enumerate}
\end{definition}

Note that the relative cap product that we use, requires some care. For example, in the case of the based coproduct, it is understood as a map
$$   \mathrm{H}^n(U_{\Omega},U_{\Omega,\geq \epsilon_0}) \otimes \mathrm{H}_{i}(\Omega\times I,\Omega\times\partial I \cup p_0\times I) \to    \mathrm{H}_{i-n}(U_{\Omega}, \Omega\times\partial I\cup p_0\times I)  $$
and analogously for the free coproduct.
See \cite[Appendix A]{hingston:2017} for details of this construction and a naturality statement.

We now want to show that the based coproduct and the free coproduct are compatible.

\begin{prop} \label{prop_comp}
Let $M$ be an oriented closed Riemannian manifold with base point $p_0\in M$ and consider singular homology with coefficients in a commutative ring $R$. 
The based coproduct and the free coproduct are compatible in the sense that the diagram 
$$
\begin{tikzcd}
\mathrm{H}_k(\Omega, p_0)    \arrow{r}{i_*}\arrow{d}[swap]{\cpro } & \mathrm{H}_k(\Lambda, M)  \arrow{d}{\cpr} \\
 \mathrm{H}_{k+1-n}(\Omega^2,\Omega\times p_0\cup p_0\times\Omega )  \arrow{r}[]{(i,i)_*} & \mathrm{H}_{k+1-n}(\Lambda^2,\Lambda\times M\cup M\times \Lambda)  
\end{tikzcd} $$
commutes. Here $i:\Omega\to \Lambda$ is the inclusion.
\end{prop}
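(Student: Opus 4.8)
The plan is to factor each coproduct into the four maps appearing in its definition and to verify commutativity one stage at a time, so that the square in the statement is realised as the outer rectangle of a vertical stack of four squares. The vertical maps in this ladder are all induced by the inclusion $i:\Omega\to\Lambda$ and by the inclusions it determines on the auxiliary spaces: the map $i\times\id$ on $\Omega\times I\hookrightarrow\Lambda\times I$, the inclusion $j:U_{\Omega}\hookrightarrow U_{\Lambda}$ from Lemma \ref{lemma_thom}, the inclusion $F_{\Omega}\hookrightarrow F_{\Lambda}$, and $(i,i):\Omega\times\Omega\hookrightarrow\Lambda\times\Lambda$, each viewed as a map of the relevant pairs. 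It then suffices to check that each of the four squares commutes.

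Three of the four squares are formal. The first, involving the cross product with $[I]$, commutes by naturality of the homological cross product with respect to $i\times\id$. The third and fourth squares commute because the based versions of $\mathrm{R}_{GH}$ and $\mathrm{cut}$ are, by construction, the restrictions of the free ones: the free $\mathrm{R}_{GH}$ precomposed with $j$ agrees with the based $\mathrm{R}_{GH}$ followed by the inclusion $F_{\Omega}\hookrightarrow F_{\Lambda}$, and likewise $\mathrm{cut}$ restricted to $F_{\Omega}$ factors through $(i,i)$ after the based $\mathrm{cut}$. Commutativity on homology then follows from functoriality of the pushforward.

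The only square that requires an argument is the second, built from the cap products with the Thom classes $\tau_{\Omega}$ and $\tau_{\Lambda}$, and this is where I expect the main difficulty to lie. Here I would appeal to the naturality of the relative cap product established in \cite[Appendix A]{hingston:2017}: the map $i\times\id$, whose restriction to $U_{\Omega}$ is $j$, intertwines capping with $\tau_{\Lambda}$ and capping with the restricted class $j^{*}\tau_{\Lambda}$, in the sense that $j_*\big(j^{*}\tau_{\Lambda}\cap x\big)=\tau_{\Lambda}\cap (i\times\id)_* x$ for $x\in\mathrm{H}_{k+1}(\Omega\times I,\Omega\times\partial I\cup p_0\times I)$. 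By Lemma \ref{lemma_thom} one has $j^{*}\tau_{\Lambda}=\tau_{\Omega}$, so this becomes $j_*\big(\tau_{\Omega}\cap x\big)=\tau_{\Lambda}\cap (i\times\id)_* x$, which is exactly the commutativity of the second square.

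The main obstacle is bookkeeping rather than conceptual: the coefficient class lives on the pair $(U_{\Omega},U_{\Omega,\geq\epsilon_0})$ while the homology is taken relative to $\Omega\times\partial I\cup p_0\times I$, with the free analogues in the lower row, so one must apply the naturality statement to precisely the right pairs of spaces and confirm that the excisions and identifications implicit in the relative cap product are compatible with the inclusions induced by $i$. Once the four squares are assembled, their outer rectangle is the diagram in the statement, and the proposition follows.
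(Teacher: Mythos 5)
Your proposal is correct and follows essentially the same route as the paper: decompose both coproducts into their defining stages, note that the cross-product and the $\mathrm{R}_{GH}$/$\mathrm{cut}$ squares commute formally (the based maps being restrictions of the free ones), and handle the cap-product square via $j^*\tau_{\Lambda}=\tau_{\Omega}$ (Lemma \ref{lemma_thom}) together with naturality of the relative cap product, i.e.\ $j_*(j^*\tau_{\Lambda}\cap X)=\tau_{\Lambda}\cap j_*X$. The only cosmetic difference is that the paper merges $\mathrm{R}_{GH}$ and $\mathrm{cut}$ into a single square, whereas you keep them separate.
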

\begin{proof}
The claimed commutativity of the diagram in the statement of the proposition follows if we verify that all subdiagrams of the following diagram commute
\begin{equation*}
\begin{tikzcd}
\mathrm{H}_{k}(\Omega,p_0) \arrow{r}{i_*} \arrow[swap]{d}{\times [I]}
 & 
 \mathrm{H}_{k}( \Lambda,M) \arrow{d}{\times [I]}
 \\
\mathrm{H}_{k+1}(\Omega\times I,\Omega\times\partial I\cup p_0\times I) \arrow{r}{(i,\mathrm{id}_I)_*}\arrow[d,"\tau_{\Omega}\cap" swap]
& 
\mathrm{H}_{k +1}(\Lambda\times I,\Lambda\times \partial I\cup M\times I) \arrow[d,"\tau_{\Lambda}\cap"]
\\ 
\mathrm{H}_{k+1-n}(U_{\Omega},\Omega\times\partial I\cup p_0\times I) \arrow{r}{(i,\mathrm{id}_I)_*} \arrow[d,"(\mathrm{cut}\circ \mathrm{R}_{GH})_*",swap]
& 
\mathrm{H}_{k +1-n}(U_{\Lambda},\Lambda\times \partial I\cup M\times I) \arrow[d,"(\mathrm{cut}\circ\mathrm{R}_{GH})_*" ]
\\ 
\mathrm{H}_{k+1-n}(\Omega \times\Omega,\Omega\times p_0\cup p_0\times \Omega) \arrow{r}{(i,i)_*\,\,}
& \mathrm{H}_{k + 1-n}(\Lambda\times\Lambda,\Lambda\times M\cup M\times \Lambda).
\end{tikzcd}
\end{equation*}
It is clear that the first and the last square of the above diagram commute.
For the middle square, let $X\in \mathrm{H}_{\bullet}(\Omega\times I,\Omega\times\partial I\cup p_0\times I)$.
Then if we write $j$ for $(i,\mathrm{id}_I)$, we see that
$$   j_* (\tau_{\Omega}\cap X) = j_* ( j^*\tau_{\Lambda} \cap X) = \tau_{\Lambda}\cap (j_*X)    $$
by Lemma \ref{lemma_thom} and the naturality of the cap product.
Hence, the above diagram commutes and therefore the coproducts are compatible.
\end{proof}

We will now focus on field coefficients for the rest of this section.
If $\mathbb{F}$ is a field and if the homology of $\Omega$, resp. $\Lambda$ is of finite type, then the based and free coproduct induce products in cohomology.
Note that the following product on the cohomology of the free loop space was first introduced by Goresky and Hingston in \cite{goresky:2009}.

\begin{definition}\label{def_coho_product}
Let $M$ be an oriented, closed manifold with base point $p_0\in M$ and let $\mathbb{F}$ be a field.
Assume that the homology of $\Omega M$, resp. $\Lambda M$ is of finite type.
\begin{enumerate}
    \item If $\alpha\in \mathrm{H}^i(\Omega,p_0;\mathbb{F})$, $\beta\in \mathrm{H}^j(\Omega,p_0 ;\mathbb{F})$, then their \textit{based cohomology product} $\alpha\ostar_{\Omega} \beta$ is defined to be the unique class in $\mathrm{H}^{i+j+n-1}(\Omega,p_0;\mathbb{F})$ that satisfies
$$   \langle \alpha \ostar_{\Omega} \beta , X\rangle = \langle \alpha \times \beta , \cpro X\rangle \qquad \text{for all} \,\,\, X\in\mathrm{H}_{\bullet}(\Omega,p_0;\mathbb{F}).    $$
\item If $\alpha\in \mathrm{H}^i(\Lambda,M;\mathbb{F})$, $\beta\in \mathrm{H}^j(\Lambda,M ;\mathbb{F})$, then their \textit{free cohomology product} $\alpha\ostar \beta$ is defined to be the unique class in $\mathrm{H}^{i+j+n-1}(\Lambda,M ;\mathbb{F})$ that satisfies
$$   \langle \alpha \ostar \beta , X\rangle = \langle \alpha \times \beta , \cpr X\rangle \qquad \text{for all} \,\,\, X\in\mathrm{H}_{\bullet}(\Lambda,M;\mathbb{F}).    $$
\end{enumerate}
\end{definition}

\begin{remark}
Let $M$ be an oriented closed Riemannian manifold.
\begin{enumerate}
    \item The free homology coproduct was first introduced in \cite[Section 8]{goresky:2009}.
    In the above exposition we followed the definition of the coproduct in \cite{hingston:2017}, where the authors give a chain-level definition. They then show that the induced coproduct in homology is equivalent to the definition in \cite{goresky:2009}.
    The free cohomology product is more often referred to as the \textit{Goresky-Hingston product}, since it was introduced by Goresky and Hingston in \cite[Section 9]{goresky:2009}.
    Using the chain level definition of the free coproduct, Hingston and Wahl obtain an equivalent definition in \cite{hingston:2017}.
    In order to make a better distinction to the based cohomology product, we will always refer to the Goresky-Hingston product as the free cohomology product.
    \item Note that we used the names product and coproduct.
    However, the operations only have the algebraic properties of what is usually understood as product and coproduct if we introduce additional sign conventions (see \cite{hingston:2017}).
    Since this paper mostly considers situations in which the products and coproducts are trivial, we will not deal with the signs and stick to the above definitions.
    \item One can also extend all the coproducts and products to absolute homology and cohomology, respectively.
    As an example, consider the free homology coproduct
    $$   \cpr : \mathrm{H}_i(\Lambda ,M)\to \mathrm{H}_{i+1-n}(\Lambda\times \Lambda,\Lambda \times M\cup M \times \Lambda) . $$
    The homology of $\Lambda $ with coefficients in a commutative ring $R$ is isomorphic to the direct sum
    $$  \mathrm{H}_i(\Lambda ) \cong \mathrm{H}_i(\Lambda, M)\oplus \mathrm{H}_i(M)   . $$
    The idea of extending the free homology coproduct to a map
    $$  \hat{\vee} :\mathrm{H}_i(\Lambda) \to \mathrm{H}_{i+1-n}(\Lambda\times \Lambda)          $$
    is therefore to extend the relative homology coproduct $\cpr$ on $\mathrm{H}_i(\Lambda, M)$ by the trivial map on $\mathrm{H}_i(M)$ in order to get a well-defined map on $\mathrm{H}_i(\Lambda)$.
    Then, one has to lift the resulting class in $\mathrm{H}_{i+1-n}(\Lambda^2,\Lambda\times M \cup M\times \Lambda)$ to a class in $\mathrm{H}_{i+1-n}(\Lambda^2)$.
    See \cite{hingston:2017} for details.
    While it might indeed be more convenient to work on the absolute homology of the free loop space, these extensions do not bring any new topological information into play.
    Therefore, for the rest of this article, we will stick to the relative definitions.
\end{enumerate}
\end{remark}

%%%%%%%%%%%%%%%%%%%%%%%%%%%%%%

\section{Splitting of the Coproduct} \label{sec3}

In this section, we will discuss how the based coproduct and the free coproduct are related for Lie groups.
While for arbitrary manifolds, we only have the compatibility statement of Proposition \ref{prop_comp}, we will now show that in the case of a Lie group the free coproduct is completely determined by the based coproduct and the push-forward of the diagonal  map of the group itself.
After that, we will examine how this result transfers to the cohomology products.

Let $G$ be a compact Lie group.
As base point we will always choose the unit element $e\in G$.
There is a homeomorphism $$\Phi: G\times \Omega G\to \Lambda G , \qquad  \Phi(g,\gamma) = (t\mapsto g\gamma(t)) .    $$
Observe that its inverse is the map 
\begin{equation} \label{eq_Psi}
 \Psi: \Lambda G\to G\times \Omega G,\qquad       \Psi (\gamma) = (\gamma(0), t\mapsto (\gamma(0)^{-1}\gamma(t) ))     .
\end{equation}

Bott \cite{bott:1956} has shown that the integer homology of $\Omega G$ is free and that it is non-trivial only in even degrees.
By the universal coefficient theorem, this property then holds for homology with coefficients in an arbitrary commutative ring $R$ as well.
The same also holds for the relative homology $\mathrm{H}_{\bullet}( \Omega G,e)$.
Consequently, there is a Künneth isomorphism
$$  \mathrm{H}_i (G\times \Omega G, G\times e) \cong \bigoplus_{k+j=i} \mathrm{H}_k(G)\otimes \mathrm{H}_j(\Omega G, e) .$$
Combining this with the homeomorphism $\Phi$ we get an isomorphism
$$    \Theta_* :  (\mathrm{H}_{\bullet}(G)\otimes \mathrm{H}_{\bullet}(\Omega G,e))_i  \xrightarrow[]{\times }  \mathrm{H}_i (G\times \Omega G,G\times e) \xrightarrow[]{\Phi_*} \mathrm{H}_i(\Lambda G, G) .   $$
Hepworth \cite{hepworth:2010} has shown that the isomorphism of groups $\Theta_*$ becomes a ring isomorphism if we equip $\mathrm{H}_{\bullet}(G)$ with the intersection product, $\mathrm{H}_{\bullet}(\Omega G)$ with the Pontryagin product and $\mathrm{H}_{\bullet}(\Lambda G)$ with the Chas-Sullivan product.
The following result can be seen as an analogous result for the coproduct.

For the statement of the theorem, observe that there is a homeomorphism $$\widetilde{\Phi}:G\times G\times \Omega G\times \Omega G \to \Lambda G\times \Lambda G$$ given by
\begin{equation} \label{eq_phitilde}
       \widetilde{\Phi}(g_1,g_2,\gamma_1,\gamma_2) = ((t\mapsto g_1\gamma_1(t)),(s\mapsto g_2\gamma_2(s)) ).   
\end{equation}

Since the relative homology $\mathrm{H}_{\bullet}(\Omega G,e)$ is free and concentrated in even degrees, there is a Künneth isomorphism
$$ \mathrm{H}_{i}(\Omega G\times \Omega G,\Omega G\times e\cup e\times \Omega G) 
\cong
\bigoplus_{k+j=i}   \mathrm{H}_{k}(\Omega G,e)\otimes \mathrm{H}_{j}(\Omega G,e) \,\, ,  
$$
see \cite[Theorem 5.3.10]{spanier:1995} for a relative version of the Künneth theorem which applies to this situation.
Hence, the relative homology $\mathrm{H}_{\bullet}(\Omega G\times \Omega G,\Omega G\times e\cup e\times \Omega G)$ is also free and concentrated in even degrees.

Consequently, there is again a Künneth isomorphism
$$ \mathrm{H}_{i}(G^2\times\Omega^2, G^2\times ( \Omega \times e \cup e\times  \Omega)) 
\cong
\bigoplus_{k+j=i} \mathrm{H}_k(G^2) \otimes \mathrm{H}_j(\Omega ^2,\Omega\times e\cup e\times \Omega).   $$
This yields an isomorphism
\begin{eqnarray*}
     \widetilde{\Theta}_* :  \big(\mathrm{H}_{\bullet}(G^2)\otimes \mathrm{H}_{\bullet}(\Omega^2,\Omega\times e\cup e\times \Omega)\big)_i  &\xrightarrow[]{\times } & \mathrm{H}_i (G^2 \times \Omega^2, G^2\times(\Omega\times e\cup e \times \Omega )) \\ &\xrightarrow[]{\widetilde{\Phi}_*}& \mathrm{H}_i(\Lambda^2 , \Lambda\times G \cup G\times \Lambda) .   
\end{eqnarray*}

\begin{theorem} \label{theorem_splitting}
Let $G$ be a compact Lie group of dimension $n$.
Consider homology with coefficients in a commutative ring $R$.
Under the isomorphisms $\Theta_*$ and $\widetilde{\Theta}_*$ the free coproduct $\cpr$ can be expressed by the tensor product of the map $d_*$ that is induced by the diagonal map $d: G\to G\times G$ and the based coproduct $\cpro$ up to a sign-correction.
More precisely, for all $i\in \mathbb{N}_0$ the following diagram commutes
$$
\begin{tikzcd}
   \bigoplus_{k+j=i} \mathrm{H}_{k}(G)\otimes \mathrm{H}_j(\Omega,e)  \arrow[d, "(-1)^{ni}d_*\otimes \cpro" ] \arrow[]{r}{\Theta_*} &
    \mathrm{H}_{i}(\Lambda,G) \arrow[]{d}{\cpr}
    \\
    \bigoplus_{k+j=i} \mathrm{H}_{k}(G^2)\otimes \mathrm{H}_{j+1-n}(\Omega^2,\Omega\times e\cup e\times \Omega) \arrow[]{r}{\widetilde{\Theta}_*} 
    &
    \mathrm{H}_{i+1-n}(\Lambda^2,\Lambda\times G\cup G\times \Lambda)
    \end{tikzcd}
$$
\end{theorem}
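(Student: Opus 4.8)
The plan is to compute the free coproduct $\cpr$ one step at a time under the splitting homeomorphism $\Phi$, and to check that each of its four constituent maps corresponds, up to a sign, to the analogous constituent of the based coproduct $\cpro$ tensored with either the identity on $G$ or, in the last step, the diagonal $d$. First I would fix a bi-invariant Riemannian metric on $G$ (which exists since $G$ is compact) and use it to compute both coproducts; the essential structural input is then that every left translation $L_g\colon \Lambda G\to\Lambda G$, $L_g\gamma = (t\mapsto g\gamma(t))$, is an isometry. Writing $\gamma = \Phi(g,\sigma)$, so that $\gamma(t) = g\sigma(t)$ and $\gamma(0) = g$, left-invariance gives $\mathrm{d}(\gamma(0),\gamma(s)) = \mathrm{d}(g, g\sigma(s)) = \mathrm{d}(e,\sigma(s))$. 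This single identity is what makes all the relevant subspaces split off a $G$-factor.

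Concretely, I would produce homeomorphisms of pairs $\Phi_U\colon G\times (U_{\Omega}, U_{\Omega,\geq\epsilon_0})\to (U_{\Lambda},U_{\Lambda,\geq\epsilon_0})$ and $\Phi_F\colon G\times F_{\Omega}\to F_{\Lambda}$, both given by $(g,\sigma,s)\mapsto (g\sigma,s)$, and check that under $\Phi$ the subspace $\Lambda\times\partial I\cup G\times I$ corresponds to $G\times(\Omega\times\partial I\cup e\times I)$ (the constant loop at $g$ is $\Phi(g,e)$). Next I would verify that $\mathrm{R}_{GH}$ and $\mathrm{cut}$ are $G$-equivariant: since $L_g$ is an isometry it commutes with $\rho_s$, $\mu_s$, optimal concatenation and the minimizing geodesics $\overline{pq}$, so $\mathrm{R}_{GH}(L_g\gamma,s) = L_g\cdot\mathrm{R}_{GH}(\gamma,s)$ and $\mathrm{cut}(L_g\gamma,s) = (L_g\times L_g)\,\mathrm{cut}(\gamma,s)$. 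Equivariance of $\mathrm{R}_{GH}$ shows that under $\Phi_U,\Phi_F$ it becomes $\mathrm{id}_G\times\mathrm{R}_{GH}$, the second factor being exactly the based retraction. The point where the diagonal enters is the cut map: for $(g,\sigma,s)$ with $\sigma(s) = e$ both $\rho_s(g\sigma)$ and $\mu_s(g\sigma)$ are loops based at $g$, so $\mathrm{cut}(g\sigma,s) = (g\rho_s\sigma, g\mu_s\sigma) = \widetilde{\Phi}(g,g,\rho_s\sigma,\mu_s\sigma)$; hence under the splittings $\mathrm{cut}$ becomes $d\times\mathrm{cut}$, the diagonal on the group factor times the based cut.

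The main obstacle is the Thom class. I would show that $(\Phi_U)^*\tau_{\Lambda} = 1\times\tau_{\Omega}$, where $1\in\mathrm{H}^0(G)$ is the unit. For this, factor $\mathrm{ev}_{\Lambda}\circ\Phi_U$ as $(g,\sigma,s)\mapsto(g,\sigma(s))\mapsto(g,g\sigma(s))$, that is, as $\mathrm{pr}_G\times\mathrm{ev}_{\Omega}$ followed by the homeomorphism $\lambda\colon G\times B_e\to U_M$, $\lambda(g,q) = (g,gq)$. Then $(\Phi_U)^*\tau_{\Lambda} = (\mathrm{pr}_G\times\mathrm{ev}_{\Omega})^*\lambda^*\tau_M$, and it remains to prove $\lambda^*\tau_M = 1\times\tau_0$. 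Under the left-invariant trivialization $TG\cong G\times\mathfrak{g}$, the map $\lambda$ exhibits $U_M$ as the trivial disk bundle over $G$; by the relative K\"unneth theorem $\mathrm{H}^n(G\times(B_e,B_{e,\geq\epsilon_0}))\cong \mathrm{H}^0(G)\otimes\mathrm{H}^n(B_e,B_{e,\geq\epsilon_0})$ because $\mathrm{H}^{\bullet}(B_e,B_{e,\geq\epsilon_0})$ is concentrated in degree $n$, so $\lambda^*\tau_M = \eta\times\tau_0$ for a single $\eta\in\mathrm{H}^0(G) = R$; restricting to one fiber $\{g\}\times B_e$ and using Lemma \ref{lemma_generators} together with connectedness of $G$ forces $\eta = 1$. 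Getting the orientation bookkeeping exactly right here, and phrasing the subsequent cap-product computation in the correct relative pairs, is the delicate part.

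Finally I would assemble the four steps using the external cap-product formula $(1\times\tau_{\Omega})\cap(a\times c) = (-1)^{nk}\,a\times(\tau_{\Omega}\cap c)$ for $a\in\mathrm{H}_k(G)$, in the relative version justified by the naturality statement of \cite[Appendix A]{hingston:2017}. Starting from $a\otimes b$ with $a\in\mathrm{H}_k(G)$, $b\in\mathrm{H}_j(\Omega,e)$ and $k+j = i$: crossing with $[I]$ gives $a\otimes(b\times[I])$ with no sign; capping with $\tau_{\Lambda}\leftrightarrow 1\times\tau_{\Omega}$ produces the sign $(-1)^{nk}$ and the class $a\otimes(\tau_{\Omega}\cap(b\times[I]))$; the pushforwards by $\mathrm{R}_{GH}$ and $\mathrm{cut}$ then act as $\mathrm{id}_G$ and $d$ on the first tensor factor and reproduce the based coproduct on the second. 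The upshot is $\cpr\,\Theta_*(a\otimes b) = \widetilde{\Theta}_*\big((-1)^{nk}\,d_*a\otimes\cpro b\big)$. Since $\mathrm{H}_{\bullet}(\Omega,e)$ is concentrated in even degrees, $j$ is even and $(-1)^{nk} = (-1)^{n(k+j)} = (-1)^{ni}$, which matches the sign $(-1)^{ni}$ in the statement and shows that the diagram commutes.
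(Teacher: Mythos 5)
Your proposal is correct and follows essentially the same route as the paper's proof: splitting each of the four stages of the coproduct under $\Phi$, using left-invariance of the metric to trivialize the tubular neighborhood $U_G\cong G\times B_e$ and pull the Thom class back to $1\times\tau_{\Omega}$, checking $G$-equivariance of $\mathrm{R}_{GH}$ and the appearance of the diagonal in $\mathrm{cut}$, and resolving the sign via $(-1)^{nk}=(-1)^{ni}$ because $\mathrm{H}_{\bullet}(\Omega,e)$ is concentrated in even degrees. The only cosmetic differences are your choice of a bi-invariant rather than merely left-invariant metric (only left-invariance is used) and your more detailed K\"unneth-plus-fiber-restriction justification of $\lambda^*\tau_M = 1\times\tau_0$, which the paper simply asserts via $\tau_G=\chi^*(1_G\times\tau_e)$.
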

\begin{proof}
For the proof, we fix a left-invariant metric on $G$.
Let $k,j\in\mathbb{N}$ and put $l = k+ j + 1-n$.
Let $\mathrm{pr}:G\times \Omega\times I \to \Omega\times I$ be the projection onto the last two factors.

We begin by proving that the following diagram commutes
$$
\begin{tikzcd}
   \mathrm{H}_{k}(G)\otimes \mathrm{H}_j(\Omega,e)  \arrow[]{d}{\times}\arrow[]{r}{\Theta_*} &
    \mathrm{H}_{k+j}(\Lambda,G) \arrow[]{d}{=}
    \\
    \mathrm{H}_{k+j}(G\times\Omega,G\times e) \arrow[]{d}{\times [I]} \arrow[]{r}{\Phi_*}
    & \mathrm{H}_{k+j}(\Lambda,G) \arrow[]{d}{\times [I]}
    \\
    \mathrm{H}_{k+j+1}(G\times \Omega\times I,G\times (e\times I\cup  \Omega\times \partial I ) ) \arrow[]{r}{(\Phi,\id_I)_*} \arrow[]{d}{\mathrm{pr}^*\tau_{\Omega}\cap }   
    &
    \mathrm{H}_{k+j+1}(\Lambda\times I,G\times I\cup \Lambda\times \partial I) \arrow[]{d}{\tau_{\Lambda}\cap}
    \\
    \mathrm{H}_{l}(G\times U_{\Omega},G\times( e\times I \cup \Omega\times \partial I )  ) \arrow[]{r}{(\Phi,\id_I)_*} \arrow[]{d}{(\id_G,\mathrm{R}_{GH})_*}
    &
    \mathrm{H}_l(U_{\Lambda},G\times I\cup \Lambda\times \partial I) \arrow[]{d}{(\mathrm{R}_{GH})_*}
    \\ 
    \mathrm{H}_l(G\times F_{\Omega}, G\times  ( e \times I\cup  \Omega\times \partial I )  ) \arrow[]{r}{(\Phi,\id_I)_*} 
    \arrow[]{d}{(d,\mathrm{cut})_*}
    &
    \mathrm{H}_l(F_{\Lambda}, G\times I\cup \Lambda\times \partial I) \arrow[]{d}{\mathrm{cut}_*}
    \\
    \mathrm{H}_l(G^2\times \Omega^2, G^2\times(\Omega\times e \cup e\times\Omega)) \arrow[]{r}{\widetilde{\Phi}_*}  
    & \mathrm{H}_l(\Lambda^2,\Lambda\times G\cup G\times \Lambda)
\end{tikzcd}
$$

The commutativity of the first two squares is clear.
Consider the third square
$$   
\begin{tikzcd}
   \mathrm{H}_{k+j+1}(G\times \Omega\times I,G\times (e\times I\cup  \Omega\times \partial I ) ) \arrow[]{r}{(\Phi,\id_I)_*} \arrow[]{d}{\mathrm{pr}^*\tau_{\Omega}\cap }   
    &
    \mathrm{H}_{k+j+1}(\Lambda\times I,G\times I\cup \Lambda\times \partial I) \arrow[]{d}{\tau_{\Lambda}\cap}
    \\
    \mathrm{H}_{l}(G\times U_{\Omega},G\times ( e \times I \cup  \Omega\times \partial I ) ) \arrow[]{r}{(\Phi,\id_I)_*} 
    &
    \mathrm{H}_l(U_{\Lambda},G\times I\cup \Lambda\times \partial I)
\end{tikzcd}
$$
To prove that this diagram commutes, let $X\in \mathrm{H}_{\bullet}(G\times \Omega\times I,G\times (e\times I\cup  \Omega\times\partial I)  )$. We need to show that
\begin{equation} \label{eq_thom_lie_1}
       \tau_{\Lambda}\cap ((\Phi,\id_I)_* X) = (\Phi,\id_I)_*(\mathrm{pr}^*\tau_{\Omega} \cap X).    
\end{equation}
By naturality, the left hand side of this equation is
\begin{equation} \label{eq_thom_lie_2}
      \tau_{\Lambda}\cap((\Phi,\id_I)_*X) = (\Phi,\id_I)_*( (\Phi,\id_I)^*\tau_{\Lambda} \cap X)   
\end{equation}
so it suffices to show that $\mathrm{pr}^*\tau_{\Omega} = (\Phi,\id_I)^*\tau_{\Lambda}$.

Consider the tubular neighborhood
$$ U_G = \{ (p,q)\in G\times G \,|\, \mathrm{d}(p,q)<\epsilon\}   $$
where $\mathrm{d}$ is the distance-function induced by the chosen Riemannian metric.
The tubular neighborhood $U_G$ is globally trivial in the sense that the map $\chi \colon U_G \to G\times B_e$ given by
$$  \chi (g,h) = (g,g^{-1}h) \qquad\text{for} \qquad (g,h)\in U_G    $$
is a homeomorphism. 
Here we use that the metric on $G$ is left-invariant, therefore also the distance function is $G$-invariant.

It follows that the Thom class $\tau_G \in \mathrm{H}^n(U_G,U_{G,\geq \epsilon_0})$ can be written as
$$   \tau_G = \chi^* (1_G \times \tau_{e})    $$
where $\tau_{e} \in \mathrm{H}^n(B_e,B_{e,\geq \epsilon_0})$ is the class defined for the based coproduct as in Lemma \ref{lemma_generators}.
Using this identity, we see that
$$  (\Phi,\id_I)^*\tau_{\Lambda} = [\chi\circ \mathrm{ev}_{\Lambda} \circ(\Phi,\id_I)]^*(1_G\times \tau_{e})  .    $$
It furthermore holds that
$$ (\chi\circ \mathrm{ev}_{\Lambda} \circ (\Phi,\id_I)) (g,\gamma,s) = (g,\gamma(s)) = (\id_G,\mathrm{ev}_{\Omega}) (g,\gamma,s) $$ for $(g,\gamma,s)\in G\times \Omega\times I $ . 
Consequently,
$$  (\Phi,\id_I)^*\tau_{\Lambda} = (\id_G,\mathrm{ev}_{\Omega})^*(1_G\times \tau_{e}) = 1_G\times \mathrm{ev}_{\Omega}^*\tau_{e} = \mathrm{pr}^*\tau_{\Omega} . $$
Consequently, we see from equations \eqref{eq_thom_lie_1} and \eqref{eq_thom_lie_2} that the third square commutes.

For the fourth square 
$$ 
\begin{tikzcd}
    \mathrm{H}_{l}(G\times U_{\Omega},G\times (e\times I \cup  \Omega\times \partial I) ) \arrow[]{r}{(\Phi,\id_I)_*} \arrow[]{d}{(\id_G,\mathrm{R}_{GH})_*}
    &
    \mathrm{H}_l(U_{\Lambda},M\times I\cup \Lambda\times \partial I) \arrow[]{d}{(\mathrm{R}_{GH})_*}
    \\ 
    \mathrm{H}_l(G\times F_{\Omega}, G\times (e \times I\cup \Omega\times \partial I ) ) \arrow[]{r}{(\Phi,\id_I)_*} 
    &
    \mathrm{H}_l(F_{\Lambda}, M\times I\cup \Lambda\times \partial I) 
\end{tikzcd}
$$
we prove its commutativity by showing that the diagram of maps
$$ 
\begin{tikzcd}
    G\times U_{\Omega} \arrow[]{r}{(\Phi,\id_I)} \arrow[swap]{d}{(\id_G,\mathrm{R}_{GH})}
    &
    U_{\Lambda} \arrow[]{d}{\mathrm{R}_{GH}}
    \\ 
    G\times F_{\Omega} \arrow[]{r}{(\Phi,\id_I)} 
    &
   F_{\Lambda}
\end{tikzcd}
$$
commutes.
Let $(g,\gamma,s)\in G\times U_{\Omega}$, i.e. $\mathrm{d}(e, \gamma(s))< \epsilon$.
Using the shorthand notation $\gamma_t = \gamma(t)$ for $t\in[0,1]$, we have
\begin{eqnarray*}
((\Phi,\mathrm{id}_I)\circ(\mathrm{id}_G,\mathrm{R}_{GH})) (g,\gamma,s) &=& \big( g\cdot \mathrm{concat}_s\big[ (\rho_s(\gamma)*\overline{\gamma_s\gamma_0}), (\overline{\gamma_0\gamma_s}* g \cdot \mu_s(\gamma)) \big]    , \,s\big)
\\ &=&
\big( \mathrm{concat}_s \big[ g\cdot \rho_s(\gamma) * \overline{(g\gamma_s)( g\gamma_0)},      \overline{(g\gamma_0)(g\gamma_s)} * g\cdot \mu_s(\gamma)           \big]
,s \big)
\\
&=&
(\mathrm{R}_{GH}\circ (\Phi,\mathrm{id}_I)) (g,\gamma,s) .
\end{eqnarray*}
The second equality holds by our use of a left-invariant metric on $G$ which implies that the geodesic segment from $g\gamma_0$ to $g\gamma_s$ can be expressed as
$$   \overline{(g\gamma_0)(g\gamma_s)} = g\cdot \overline{\gamma_0\gamma_s}      $$
since left-multiplication by $g\in G$ is an isometry.
This proves that the fourth square commutes.

The commutativity of the last square can be seen in an analogous manner by checking that the underlying maps commute.
Thus we have shown that the large diagram commutes. 

Call the map that is defined by the left vertical side of the large diagram $$\Xi : \mathrm{H}_k(G)\otimes \mathrm{H}_j(\Omega,e)\to \mathrm{H}_{l}(G^2\times \Omega^2,G^2\times (\Omega\times e\cup e \times \Omega)).$$
In order to complete the proof of the theorem, we need to show that the following diagram commutes 
$$ \begin{tikzcd}
    & \mathrm{H}_k(G)\otimes \mathrm{H}_j(\Omega,e)\arrow[swap]{ld}{(-1)^{n(k+j)}d_*\otimes \cpr_{\Omega} } \arrow[]{dd}{\Xi}  \\
    \mathrm{H}_k(G^2) \otimes \mathrm{H}_{j+1-n}(\Omega^2,\Omega\times e\cup e \times \Omega) \arrow[]{rd}{\times }  &
    \\
   &   \mathrm{H}_{l}(G^2\times \Omega^2,G^2\times(\Omega\times e\cup e \times \Omega ))
\end{tikzcd}   $$
This is easily seen by unwinding the definitions. 
Let $X\in \mathrm{H}_k(G)$ and $Y\in \mathrm{H}_j(\Omega,e)$, then
\begin{eqnarray*}
    \Xi (X\otimes Y)&=&  (d,\mathrm{cut})_*(\id_G,\mathrm{R}_{GH})_* ( \mathrm{pr}^*\tau_{\Omega} \cap (X\times Y\times [I]) ) \\
    & = &  (d,\mathrm{cut}\circ \mathrm{R}_{GH})_* ((1_G\times \tau_{\Omega}) \cap (X \times (Y\times [I]))) \\ &=&
    (-1)^{nk}(d,\mathrm{cut}\circ \mathrm{R}_{GH})_*(X\times (\tau_{\Omega}\cap (Y\times [I]))) 
    \\ &=& (-1)^{nk} (d_*X \times \cpro Y) .
\end{eqnarray*}
Now, if $Y$ is non-trivial, then $j$ is even, hence we see that
$$  \Xi (X\otimes Y) =  (-1)^{n(k+j)} (d_*X \times \cpro Y) .  $$

The commutativity of this last diagram combined with the commutativity of the large diagram at the beginning of the proof show the claim.
\end{proof}

We now prove a dual result about the cohomology products introduced in Section \ref{sec2}.
We will only deal with cohomology with coefficients in a field $\mathbb{F}$.
Note that since the cohomology of the Lie group is of finite type, the cross product in cohomology induces an isomorphism
$$ (\mathrm{H}^{\bullet}(G;\mathbb{F}) \otimes \mathrm{H}^{\bullet}(\Omega G , e;\mathbb{F}))^i \cong \mathrm{H}^i(G\times \Omega G , G\times e;\mathbb{F})     $$
see \cite[Theorem 5.6.1]{spanier:1995}.
Combining this with the pull-back of the map $\Psi$ defined in equation \eqref{eq_Psi}, we get an isomorphism
$$  \Theta^* :    (\mathrm{H}^{\bullet}(G;\mathbb{F}) \otimes \mathrm{H}^{\bullet}(\Omega ,e;\mathbb{F}))^i \xrightarrow[]{\times} \mathrm{H}^i(G\times \Omega , G\times e;\mathbb{F})    \xrightarrow[]{\Psi^*}  \mathrm{H}^i(\Lambda , G;\mathbb{F}).  $$

\begin{theorem} \label{theorem_product}
Let $G$ be a compact Lie group.
Assume that the homology $\mathrm{H}_{\bullet}(\Omega G;\mathbb{F})$ is of finite type.
The free cohomology product on $\Lambda G$ can be expressed by the tensor product of the cup product on the cohomology of $G$ and the based cohomology product on $\Omega G$.
More precisely, if $\alpha\in\mathrm{H}^i(\Lambda, G ;\mathbb{F})$ and $\beta\in \mathrm{H}^j(\Lambda, G;\mathbb{F})$ are of the form
$$  \alpha = \Psi^*(a\times A) \qquad \text{and}\qquad \beta = \Psi^*(b\times B)    $$
where $a,b\in\mathrm{H}^{\bullet}(G;\mathbb{F})$ and $A,B\in\mathrm{H}^{\bullet}(\Omega, e;\mathbb{F})$, then
$$  \alpha\ostar \beta = (-1)^{n(i+j)} \Psi^*[(a\cup b) \times (A\ostar_{\Omega}B)] .     $$
\end{theorem}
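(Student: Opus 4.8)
\emph{The plan is to dualize Theorem \ref{theorem_splitting}.} Over a field $\mathbb{F}$ with homology of finite type, the free cohomology product $\alpha\ostar\beta$ is \emph{characterized} by the defining relation $\langle\alpha\ostar\beta,X\rangle=\langle\alpha\times\beta,\cpr X\rangle$ of Definition \ref{def_coho_product}. Writing $C=(-1)^{n(i+j)}\Psi^*[(a\cup b)\times(A\ostar_{\Omega}B)]$ for the claimed right-hand side, it therefore suffices, by uniqueness, to verify that $\langle C,X\rangle=\langle\alpha\times\beta,\cpr X\rangle$ for every $X\in\mathrm{H}_{\bullet}(\Lambda,G;\mathbb{F})$. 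Since $\Theta_*$ is an isomorphism, every such $X$ is a sum of classes $X=\Theta_*(x\otimes\xi)=\Phi_*(x\times\xi)$ with $x\in\mathrm{H}_{\bullet}(G;\mathbb{F})$ and $\xi\in\mathrm{H}_{\bullet}(\Omega,e;\mathbb{F})$, so by linearity I only need to check the identity on these generators; I write $m=|x|+|\xi|$.

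For the left pairing I substitute the splitting theorem: by Theorem \ref{theorem_splitting}, $\cpr X=(-1)^{nm}\,\widetilde{\Theta}_*(d_*x\otimes\cpro\xi)=(-1)^{nm}\,\widetilde{\Phi}_*\big((d_*x)\times(\cpro\xi)\big)$. Applying the adjunction $\langle\alpha\times\beta,\widetilde{\Phi}_*(z)\rangle=\langle\widetilde{\Phi}^*(\alpha\times\beta),z\rangle$ reduces everything to computing $\widetilde{\Phi}^*(\alpha\times\beta)$. The key structural observation is that $\widetilde{\Phi}=(\Phi\times\Phi)\circ\sigma$, where $\sigma(g_1,g_2,\gamma_1,\gamma_2)=(g_1,\gamma_1,g_2,\gamma_2)$ shuffles the two middle factors. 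Because $\Psi=\Phi^{-1}$ yields $\Phi^*\Psi^*=\mathrm{id}$, one gets $\Phi^*\alpha=a\times A$ and $\Phi^*\beta=b\times B$, whence $\widetilde{\Phi}^*(\alpha\times\beta)=\sigma^*\big((a\times A)\times(b\times B)\big)$ equals $(a\times b)\times(A\times B)$ up to the Koszul sign produced by $\sigma$. Pairing against $(d_*x)\times(\cpro\xi)$ and using $\langle a\times b,d_*x\rangle=\langle d^*(a\times b),x\rangle=\langle a\cup b,x\rangle$ together with the defining property $\langle A\times B,\cpro\xi\rangle=\langle A\ostar_{\Omega}B,\xi\rangle$ of the based product, the whole expression collapses to a scalar multiple of $\langle a\cup b,x\rangle\,\langle A\ostar_{\Omega}B,\xi\rangle$. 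The right pairing $\langle C,X\rangle$ is computed directly: since $\Psi_*\Phi_*=\mathrm{id}$ and $X=\Phi_*(x\times\xi)$, it reduces to $(-1)^{n(i+j)}\langle(a\cup b)\times(A\ostar_{\Omega}B),x\times\xi\rangle$, again a scalar multiple of the same product.

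\emph{The main obstacle is matching the two Koszul signs.} The factor $(-1)^{nm}$ simplifies to $(-1)^{n(i+j)}$ because $n(n-1)$ is even, and after cancelling the common cross-product pairing signs the two prefactors differ only by $(-1)^{|A|\,|b|+(n-1)(|a|+|b|)}$, which must be shown to equal $+1$. Here I would exploit that $\mathrm{H}_{\bullet}(\Omega G)$, and hence $\mathrm{H}^{\bullet}(\Omega,e;\mathbb{F})$, is concentrated in even degrees: if $|A|$ or $|B|$ is odd the classes vanish and there is nothing to prove, so I may assume $|A|$ and $|B|$ are even, killing the term $(-1)^{|A|\,|b|}$. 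For $n$ odd the remaining exponent $(n-1)(|a|+|b|)$ is even, so the signs agree and $\langle C,X\rangle=\langle\alpha\ostar\beta,X\rangle$ for all $X$. For $n$ even the degree $|A\ostar_{\Omega}B|=|A|+|B|+n-1$ is odd, so $A\ostar_{\Omega}B=0$; the same degree obstruction makes $\langle\alpha\ostar\beta,X\rangle$ vanish for every generator $X$, and the asserted identity reduces to $0=0$. In every case the prefactor $(-1)^{n(i+j)}$ is exactly the required correction. I expect this sign bookkeeping, together with the clean case split forced by the even-degree concentration of $\mathrm{H}_{\bullet}(\Omega G)$, to be the only delicate point; the remaining steps are routine naturality of the cross and cup products.
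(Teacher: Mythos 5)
Your proposal is correct and takes essentially the same route as the paper's proof: you dualize via the defining pairing, reduce to the spanning classes $X=\Phi_*(x\times\xi)$, substitute Theorem \ref{theorem_splitting}, and identify $(\Psi,\Psi)\circ\widetilde{\Phi}$ (your shuffle $\sigma$) as the swap of the middle factors, with the Koszul signs killed by the even-degree concentration of $\mathrm{H}_{\bullet}(\Omega G,e)$ and the simplification $(-1)^{nk}=(-1)^{n(i+j)}$ since $n(n-1)$ is even. Your extra K\"unneth-pairing sign $(-1)^{(n-1)(|a|+|b|)}$ and the case split on the parity of $n$ amount to a slightly more explicit version of the same bookkeeping the paper performs implicitly (for $n$ even both sides vanish anyway, consistent with Theorem \ref{theorem_even_d}).
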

\begin{proof}
Since the homology $\mathrm{H}_{\bullet}(\Omega G;\mathbb{F})$ is of finite type, so is $\mathrm{H}_{\bullet}(\Lambda G;\mathbb{F})$ and thus the cohomology products are defined.
The free cohomology product of $\alpha$ and $\beta$ is determined by the natural pairing of homology and cohomology, i.e. we have
\begin{equation} \label{eq_nat_pairing}
       \langle \alpha\ostar \beta ,X\rangle = \langle \alpha\times \beta,\cpr X\rangle \qquad \text{for all} \,\,\,X\in\mathrm{H}_{\bullet}(\Lambda, G;\mathbb{F}).     
\end{equation}
Since the natural pairing is trivial if the homology class and the cohomology class have different degrees, it is sufficient to consider homology classes $X$ of degree $k = i+j+n-1$.

Let $X\in \mathrm{H}_k(\Lambda, G;\mathbb{F})$ and assume that the class $X$ is of the form
\begin{equation} \label{eq_product}
      X = \Phi_*(x\times \xi) \qquad \text{with}\,\,\,\, x\in \mathrm{H}_{\bullet}(G;\mathbb{F}),\, \xi\in\mathrm{H}_{\bullet}(\Omega , e;\mathbb{F}) .  
\end{equation}
Equation \eqref{eq_nat_pairing} then becomes
\begin{eqnarray} \label{eq_alg}
\langle \alpha\times\beta,\cpr X\rangle & = & \langle \Psi^*(a\times A) \times \Psi^*(b\times B), (-1)^{nk} \widetilde{\Phi}_*(d_*x \times \cpro\xi )\, \rangle \nonumber \\
& = & (-1)^{nk} \langle \widetilde{\Phi}^*(\Psi,\Psi)^* (a\times A\times b\times B) , d_*x \times \cpro \xi\rangle \nonumber \\
& = & (-1)^{nk} \langle  ((\Psi,\Psi)\circ \widetilde{\Phi})^* (a\times A\times b\times B) , d_* x \times \cpro \xi\rangle 
\end{eqnarray}
where $\widetilde{\Phi}$ was defined in equation \eqref{eq_phitilde} and
where we used Theorem \ref{theorem_splitting} in the first equality.

A direct computation shows that
$$   ((\Psi,\Psi)\circ\widetilde{\Phi})(g_1,g_2,\gamma_1,\gamma_2) = (g_1,\gamma_1,g_2,\gamma_2) \qquad \text{for} \,\,\, g_1,g_2\in G,\,\gamma_1,\gamma_2\in\Omega    .  $$
Therefore, $(\Psi,\Psi)\circ\widetilde{\Phi}$ is just the swapping map
$$   G\times G\times \Omega \times \Omega \to G\times \Omega \times G \times\Omega    $$
interchanging the second and the third factor.
By the standard properties of the cross product we obtain
$$ ((\Psi,\Psi)\circ \widetilde{\Phi})^* (a\times A\times b\times B)  = (-1)^{|A||b|} a\times b\times A\times B $$
where $|\cdot|$ is the degree of a cohomology class.
As for homology, the cohomology of the based loop space is only non-trivial in even degrees, so if the class $A$ is non-trivial, the sign $(-1)^{|A||b|}$ is always equal to $1$. 
Going back to equation \eqref{eq_alg}, we see that
\begin{eqnarray*}
\langle \alpha\ostar \beta ,X\rangle & = &
(-1)^{nk} \langle a\times b\times A\times B, d_*x\times \cpro \xi\rangle 
\\
&=& (-1)^{nk} \langle a\times b,d_*x\rangle \langle A\times B,\cpro \xi\rangle
\\
&=& (-1)^{nk} \langle (a\cup b) \times (A\ostar_{\Omega} B), x\times \xi\rangle 
\\
&=& (-1)^{nk} \langle \Psi^* ((a\cup b) \times (A\ostar_{\Omega} B) ), X \rangle
\end{eqnarray*}
where we used the definitions of the cup product and of the based cohomology product, respectively.
For the sign, we observe that 
$$  (-1)^{nk} =  (-1)^{n(i+j)}(-1)^{n(n-1)} = (-1)^{n(i+j)}  .    $$
This shows that
\begin{equation} \label{eq_defining}
      \langle \alpha\ostar \beta ,X\rangle =  (-1)^{n(i+j)} \langle \Psi^* ((a\cup b) \times (A\ostar_{\Omega} B) ), X \rangle  
\end{equation}
for all classes $X$ that can be written as in equation \eqref{eq_product}.
But since these classes span all of $\mathrm{H}_k(\Lambda,G;\mathbb{F})$, it follows that equation \eqref{eq_defining} holds for all $X\in\mathrm{H}_k(\Lambda, G;\mathbb{F})$.
By the non-degeneracy of the natural pairing this completes the proof of the theorem.
\end{proof}

%%%%%%%%%%%%%%%%%%%%%%%%%%%%%

\section{Triviality of the Coproduct for Even-Dimensional Lie Groups} \label{sec_even}

We are now going to show that the coproduct is trivial for large classes of Lie groups.
In this section, we will consider compact, even-dimensional Lie groups, where the triviality follows directly from Theorem \ref{theorem_splitting}.

\begin{theorem} \label{theorem_even_d}
Let $G$ be an even-dimensional compact Lie group.
Consider homology with coefficients in a commutative ring $R$.
Then the based coproduct $\cpro$ and the free coproduct $\cpr$ are trivial.
\end{theorem}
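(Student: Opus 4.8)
The plan is to derive both triviality statements from a single parity observation together with Theorem \ref{theorem_splitting}. The essential input is Bott's theorem, recalled in Section \ref{sec3}: the relative homology $\mathrm{H}_{\bullet}(\Omega G, e)$ with coefficients in $R$ is free and concentrated in even degrees. Combined with the relative Künneth isomorphism quoted from \cite{spanier:1995}, this forces the target group $\mathrm{H}_{\bullet}(\Omega^2, \Omega\times e\cup e\times\Omega)$ of the based coproduct to be free and concentrated in even degrees as well. So both the source and the target of $\cpro$ live only in even degrees.

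First I would treat the based coproduct. Recall that $\cpro$ shifts homological degree by $1-n$, sending $\mathrm{H}_j(\Omega, e)$ to $\mathrm{H}_{j+1-n}(\Omega^2, \Omega\times e\cup e\times\Omega)$. When $n$ is even the shift $1-n$ is odd. The source $\mathrm{H}_j(\Omega,e)$ is nonzero only for $j$ even, and for such $j$ the target degree $j+1-n$ is odd; but the target is concentrated in even degrees, hence vanishes in odd degree. Therefore $\cpro$ is the zero map on every $\mathrm{H}_j(\Omega,e)$, which is exactly the triviality of the based coproduct.

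Finally, the free coproduct follows immediately. By Theorem \ref{theorem_splitting}, under the isomorphisms $\Theta_*$ and $\widetilde{\Theta}_*$ the map $\cpr$ is identified with $(-1)^{ni}\,d_*\otimes\cpro$. Since the factor $\cpro$ vanishes by the previous paragraph, the whole tensor-product map is zero, and as $\Theta_*$ and $\widetilde{\Theta}_*$ are isomorphisms this shows $\cpr$ itself is trivial. No further computation is needed.

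There is no genuine obstacle in this argument; the only point requiring care is the justification that the target of $\cpro$ is concentrated in even degrees, so that the odd-degree shift really annihilates the image. This is precisely the even-concentration established via the relative Künneth isomorphism in Section \ref{sec3}, and I would simply cite it rather than re-derive it. Everything else is a degree bookkeeping that the even-dimensionality of $G$ makes automatic.
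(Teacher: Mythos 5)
Your proposal is correct and follows essentially the same route as the paper: Bott's even-degree concentration of $\mathrm{H}_{\bullet}(\Omega G,e)$ together with the relative K\"unneth isomorphism forces the odd degree shift $1-n$ to annihilate $\cpro$, and the triviality of $\cpr$ then follows from Theorem \ref{theorem_splitting} exactly as in the paper. No gaps; the degree bookkeeping is precisely the published argument.
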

\begin{proof}
As we have mentioned earlier, the relative homology of the based loop space $\mathrm{H}_{\bullet}(\Omega G,e)$ is free and concentrated in even degrees (see \cite{bott:1956}).
Consequently, by the Künneth isomorphism the same properties hold for the relative homology $\mathrm{H}_{\bullet}(\Omega G\times \Omega G,\Omega G\times e\cup e\times \Omega G)$.

The based coproduct is a map
$$  \cpro : \mathrm{H}_i(\Omega G,e) \to \mathrm{H}_{i+1-n}(\Omega G\times \Omega G,\Omega G\times e\cup e \times \Omega G)     $$
with degree shift $1-n$.
Since $n$ is even, this degree shift is odd.
Consider now the case where $i$ is even. Then the degree $i+1-n$ is odd, so $\mathrm{H}_{i+1-n}(\Omega G^2,\Omega G\times e\cup e\times \Omega G )$ is trivial.
Hence, for even $i$, the based coproduct must be trivial.
In the other case, one can see similarly that the based coproduct must be trivial as well.
This shows that the based coproduct must be trivial.

By Theorem \ref{theorem_splitting}, the free coproduct $\cpr$ on $\mathrm{H}_{\bullet}(\Lambda, G)$ can be expressed as 
\begin{equation} \label{eq_split_even}
     d_* \otimes \cpro : (\mathrm{H}_{\bullet}(G)\otimes \mathrm{H}_{\bullet}(\Omega G,e))_i \to (\mathrm{H}_{\bullet}(G^2)\otimes \mathrm{H}_{\bullet}(\Omega G^2, \Omega G\times e \cup e \times \Omega G))_{i+1-n}     
\end{equation}
under the isomorphisms described in Theorem \ref{theorem_splitting}.
We can drop the sign that appears in Theorem \ref{theorem_splitting}, since the dimension $n$ of $G$ is even.
As we have already shown, the based coproduct $\cpro$ is trivial in this situation.
This implies that the map in equation \eqref{eq_split_even} is trivial and by Theorem \ref{theorem_splitting} this implies the triviality of the free coproduct.
\end{proof}
\begin{cor}
Let $G$ be a compact even-dimensional Lie group and let $\mathbb{F}$ be a field.
Assume that $\mathrm{H}_{\bullet}(\Omega G;\mathbb{F})$ is of finite type.
Then the based cohomology product and the free cohomology product of $G$ are trivial.
\end{cor}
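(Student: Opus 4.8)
The plan is to dualize Theorem \ref{theorem_even_d} directly, using that over a field the natural pairing between homology and cohomology is perfect. The two cohomology products are defined in Definition \ref{def_coho_product} purely through this pairing: $\alpha\ostar_{\Omega}\beta$ is the unique class with $\langle \alpha\ostar_{\Omega}\beta, X\rangle = \langle \alpha\times\beta, \cpro X\rangle$ for all $X\in\mathrm{H}_{\bullet}(\Omega,e;\mathbb{F})$, and similarly $\alpha\ostar\beta$ is characterized by $\langle \alpha\ostar\beta, X\rangle = \langle \alpha\times\beta, \cpr X\rangle$ for all $X\in\mathrm{H}_{\bullet}(\Lambda,G;\mathbb{F})$. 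So everything reduces to the behaviour of the homology coproducts, which is already settled.

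First I would invoke Theorem \ref{theorem_even_d}: since $G$ is compact, even-dimensional, both the based coproduct $\cpro$ and the free coproduct $\cpr$ vanish identically. Substituting this into the defining pairings, I get $\langle \alpha\ostar_{\Omega}\beta, X\rangle = \langle \alpha\times\beta, \cpro X\rangle = 0$ for every homology class $X$, and likewise $\langle \alpha\ostar\beta, X\rangle = 0$ for every $X\in\mathrm{H}_{\bullet}(\Lambda,G;\mathbb{F})$. Thus each product pairs trivially against all of homology.

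The remaining step is to conclude that a cohomology class which pairs to zero with every homology class is itself zero. This is where the hypotheses enter: because we work over a field $\mathbb{F}$ and $\mathrm{H}_{\bullet}(\Omega G;\mathbb{F})$ (hence, via the splitting $\Lambda G\cong G\times\Omega G$ and the K\"unneth theorem, $\mathrm{H}_{\bullet}(\Lambda G;\mathbb{F})$) is of finite type, the evaluation pairing $\mathrm{H}^k(-;\mathbb{F})\otimes\mathrm{H}_k(-;\mathbb{F})\to\mathbb{F}$ is nondegenerate in the cohomology variable. Therefore $\alpha\ostar_{\Omega}\beta = 0$ and $\alpha\ostar\beta = 0$, which is the assertion.

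I do not expect a genuine obstacle here, as the statement is a formal consequence of Theorem \ref{theorem_even_d} once the products are unwound through their defining pairings; the only point requiring care is recording where the finite-type and field assumptions are used, namely to guarantee that the products are defined at all and that the pairing is nondegenerate. One could alternatively phrase the argument more structurally: the cohomology products are, by construction, the $\mathbb{F}$-linear duals of the homology coproducts precomposed with the cross product, so the dual of the zero map is the zero map, giving the conclusion without explicitly invoking nondegeneracy.
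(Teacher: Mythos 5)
Your proposal is correct and follows essentially the same route as the paper: both deduce finite type of $\mathrm{H}_{\bullet}(\Lambda G;\mathbb{F})$ from that of $\mathrm{H}_{\bullet}(\Omega G;\mathbb{F})$, invoke Theorem \ref{theorem_even_d} to kill the homology coproducts, and conclude via the defining pairings of Definition \ref{def_coho_product}. You simply spell out the duality/nondegeneracy step that the paper leaves implicit, which is a fine (and arguably clearer) presentation of the same argument.
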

\begin{proof}
Since $\mathrm{H}_{\bullet}(\Omega G;\mathbb{F})$ is of finite type, so is $\mathrm{H}_{\bullet}(\Lambda G;\mathbb{F})$.
Hence, we can use Theorem \ref{theorem_even_d} to see that the cohomology products must vanish.
\end{proof}

\section{Triviality of the Coproduct for certain Simply Connected Lie Groups} \label{sec_rank}
We now turn to compact, simply connected Lie groups of rank $r\geq 2$.
There are lots of classical results about the topology of the based loop space of a compact Lie group by Bott (see \cite{bott:1956} and \cite{bott:1958b}) and by Bott and Samelson (see \cite{bott:1958a}).
We will use an explicit description of cycles in the based loop space. 
Throughout this section, we consider homology with integer coefficients.

Before, we investigate these cycles, let us define the intersection multiplicity of a homology class.
The following definition is given as in \cite[Definition 5.1]{hingston:2017}.
\begin{definition} Let $M$ be a Riemannian manifold.
     Let $[X]\in \mathrm{H}_{\bullet}(\Lambda M, M)$ be a non-trivial homology class with representing cycle $X\in \mathrm{C}_{\bullet}(\Lambda M, M)$.
     Assume that the relative cycle $X$ is represented by a cycle $x\in \mathrm{C}_{\bullet}(\Lambda M)$.
    The \textit{basepoint intersection multiplicity} $\mathrm{int}([X])$ of the class $[X]$ is the number
    $$    \mathrm{int}([X]) = \inf_{A \sim x} \Big(\sup\big[ \#(\gamma^{-1}(\{\gamma(0)\})) \,|\, \gamma\in \mathrm{Im}(A),\,\,\, \mathcal{L}(\gamma)> 0 \big] \Big)  -1   $$
    where the infimum is taken over all cycles $A \in \mathrm{C}_{\bullet}(\Lambda M)$ homologous to $x$.
\end{definition}
Note that this definition does not depend on the choice of the cycle $x$, since if $x'\in \mathrm{C}_{\bullet}(\Lambda M)$ is another cycle representing the relative cycle $X$, then $x$ and $x'$ differ only by elements of $\mathrm{C}_{\bullet}(M)$. However, in the above definition we do not consider trivial loops, so the basepoint intersection multiplicity is well-defined.

Also note that the correction of $-1$ comes from the fact that we consider loops as maps $\gamma :[0,1]\to X$ with $\gamma(0)=\gamma(1)$. Hence, for any loop, we would count the basepoint twice if we did not perform this correction.
By using finite-dimensional approximations of $\Lambda M$, one can furthermore see that the number $\mathrm{int}([X])$ is always finite (see \cite[p. 44]{hingston:2017}).

Hingston and Wahl show the following result (see \cite[Theorem 3.10 (C)]{hingston:2017}).
\begin{prop} \label{prop_int_triv}
Let $M$ be an oriented, closed manifold.
If $X\in\mathrm{H}_i(\Lambda M, M)$ is a non-trivial homology class with basepoint intersection multiplicity $\mathrm{int}(X) = 1$, then the free coproduct $\cpr X$ vanishes.
\end{prop}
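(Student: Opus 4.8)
The plan is to compute $\cpr X$ directly from its chain-level definition and to show that, for a well-chosen representative, the construction never produces a non-degenerate cut. \emph{First} I would exploit the hypothesis to fix a good representative. The value $\mathrm{int}(X)=1$ provides, by definition, a cycle $A\in\mathrm{C}_\bullet(\Lambda M)$ homologous to a representative of $X$ for which $\sup\{\#(\gamma^{-1}(\{\gamma(0)\})):\gamma\in\mathrm{Im}(A),\ \mathcal{L}(\gamma)>0\}=2$. Since $t=0$ and $t=1$ always lie in $\gamma^{-1}(\{\gamma(0)\})$ for a loop $\gamma$, this forces $\gamma(s)\neq\gamma(0)$ for every positive-length loop $\gamma\in\mathrm{Im}(A)$ and every interior parameter $s\in(0,1)$; that is, $A$ has \emph{no interior basepoint-returns}. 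Because the count $\#(\gamma^{-1}(\{\gamma(0)\}))$ is integer-valued, the infimum defining $\mathrm{int}$ is realized by the finite-dimensional approximation already used to prove $\mathrm{int}(X)<\infty$, so such an $A$ genuinely exists rather than being only approached.

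\emph{Second}, I would track the geometric support of the class through the four maps defining $\cpr$. The maps $\mathrm{cut}$ and $\mathrm{R}_{GH}$ only see the cut locus $F_\Lambda=\{(\gamma,s):\gamma(s)=\gamma(0)\}$, and the relative subspace $\Lambda\times\partial I\cup M\times I$ is already contained in $F_\Lambda$: at $s\in\{0,1\}$ because $\gamma$ is a loop, and on constant loops trivially. For the representative $A$, the intersection $F_\Lambda\cap(\mathrm{Im}(A)\times I)$ meets the positive-length loops only in $\Lambda\times\partial I$, while the constant loops lie in $M\times I$; hence $F_\Lambda\cap(\mathrm{Im}(A)\times I)\subseteq\Lambda\times\partial I\cup M\times I$. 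Consequently $\mathrm{cut}$ sends the exact-return part of the support into $\Lambda\times M\cup M\times\Lambda$, where it represents zero.

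\emph{The main obstacle} is that the cap product $\tau_\Lambda\cap(X\times[I])$ is supported not on $F_\Lambda$ itself but on the open neighborhood $U_\Lambda$, which for each positive-length loop contains two short intervals of \emph{near}-returns around $s=0$ and $s=1$ where $0<\mathrm{d}(\gamma(0),\gamma(s))<\epsilon$; on these intervals $\mathrm{R}_{GH}$ inserts the geodesic $\overline{\gamma(0)\gamma(s)}$ and produces genuinely non-degenerate cuts, so they cannot be discarded naively. I would resolve this by combining a deformation retraction with the relative structure: each near-return interval is connected and has its unique exact-return endpoint at $s\in\partial I$, i.e. inside $\Lambda\times\partial I$, so sliding the near-return parameter to that endpoint deformation retracts $U_\Lambda\cap(\mathrm{Im}(A)\times I)$ onto $F_\Lambda\cap(\mathrm{Im}(A)\times I)$ rel $\Lambda\times\partial I\cup M\times I$. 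Making this retraction compatible with the localized cap product, via the naturality statement in \cite[Appendix A]{hingston:2017}, shows that $\tau_\Lambda\cap(X\times[I])$ already vanishes in $\mathrm{H}_{\bullet+1-n}(U_\Lambda,\Lambda\times\partial I\cup M\times I)$; equivalently, the relative cap with the Thom class only detects transverse returns in the \emph{interior} of $I$, of which there are none for $A$. Verifying this retraction and its compatibility with the relative cap product is the technical heart of the argument; once it is in place, $(\mathrm{cut}\circ\mathrm{R}_{GH})_*$ carries a zero class to zero and $\cpr X=0$ follows.
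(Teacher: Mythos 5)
The paper does not actually prove this proposition; it is quoted from \cite[Theorem 3.10 (C)]{hingston:2017}, so your attempt must be measured against that argument, whose overall strategy (localize the class via the Thom cap and push it into the relative subspace) you have correctly identified. Your first two steps are fine; in fact the existence of a representative $A$ with no interior basepoint-returns needs no finite-dimensional approximation: each $\sup$ in the definition of $\mathrm{int}$ is an integer $\geq 2$, so an infimum equal to $2$ is attained. The genuine gap is in your third step. The hypothesis $\mathrm{int}(X)=1$ controls only \emph{exact} returns $\gamma(s)=\gamma(0)$; it says nothing about $\epsilon$-near returns. A positive-length loop $\gamma\in\mathrm{Im}(A)$ may satisfy $0<\mathrm{d}(\gamma(0),\gamma(s))<\epsilon$ on a whole interval of interior parameters $s$ without ever meeting its basepoint there, so $U_{\Lambda}\cap(\mathrm{Im}(A)\times I)$ can have connected components entirely disjoint from $F_{\Lambda}$. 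Your claim that each near-return interval ``has its unique exact-return endpoint at $s\in\partial I$'' is therefore false, the proposed sliding retraction is undefined on such components, and the assertion that $\tau_{\Lambda}\cap(X\times[I])$ already vanishes in $\mathrm{H}_{\bullet+1-n}(U_{\Lambda},\Lambda\times\partial I\cup M\times I)$ is unsupported as stated.

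The missing idea, which is how the cited proof proceeds, is compactness of the representative together with independence of the auxiliary choices. Since $A$ is a cycle, $K=\mathrm{Im}(A)$ is compact, and $f(\gamma,s)=\mathrm{d}(\gamma(0),\gamma(s))$ vanishes on $K\times I$ exactly on $K\times\partial I$ together with $K_0\times I$, where $K_0$ denotes the constant loops in $K$. Hence outside any collar $K\times([0,\delta]\cup[1-\delta,1])$ and any neighborhood of $K_0\times I$, the function $f$ attains a positive minimum $c>0$; choosing $\epsilon_0<c$ places every interior near-return inside $U_{\Lambda,\geq\epsilon_0}$, where the relative Thom class $\tau_{\Lambda}\in\mathrm{H}^n(U_{\Lambda},U_{\Lambda,\geq\epsilon_0})$ kills it. Note that this forces you to invoke the nontrivial fact that $\cpr$ is independent of $\epsilon_0$ (and of $\epsilon$), since these were fixed before $A$ was chosen. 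What survives is supported in the collar and near the almost-constant loops: the collar part slides into $\Lambda\times\partial I$ as you describe, while the short loops --- which your sketch ignores --- are handled by contracting loops of length less than $\epsilon$ to their basepoints, i.e.\ into the relative part $M\times I$; after applying $(\mathrm{cut}\circ\mathrm{R}_{GH})_*$ everything then lands in $\Lambda\times M\cup M\times\Lambda$. With these two ingredients your outline becomes essentially the argument of \cite{hingston:2017}; without them the crucial vanishing step fails.
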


From now on, let $G$ be a compact, simply connected Lie group of rank $r\geq 2$.
In particular, $G$ is then semisimple (see \cite[Section V.7]{broecker:85}).
We shall now describe some explicit cycles in $\Omega G$, closely following the exposition in \cite[Section 3]{bott:1958b}.

Denote the Lie algebra of $G$ by $\gg$.
Let $T\subseteq G$ be a maximal torus, which implies $\mathrm{dim}(T) = r$. Furthermore, let $\gg_{\CC}$ be the complexification of the Lie algebra $\gg$.
Then the Lie algebra $\mathfrak{t}$ of the maximal torus $T$ is a maximal abelian subalgebra of $\gg$ and its complexification $\hh=\mathfrak{t}\oplus i\mathfrak{t}$ is a Cartan subalgebra of $\mathfrak{g}_{\mathbb{C}}$.

A root of the complex semisimple Lie algebra $\gg_{\CC}$ is an element $\alpha$ of the dual space $\hh^*$ such that there is a non-trivial subspace $\gg_{\alpha}\subseteq \gg_{\CC}$ with
$$  [H,X] = \alpha(H) X \qquad \text{for all} \,\,\, H\in\hh,\,\, X\in\gg_{\alpha}.    $$
Denote the set of non-zero roots by $\Delta$, then one obtains a decomposition
$$   \gg_{\CC} = \hh \oplus \bigoplus_{\alpha\in\Delta} \gg_{\alpha} .    $$
If a non-zero root $\alpha\in \Delta$ is restricted to $\mathfrak{t}\subseteq \hh$, then its values are purely imaginary.
\begin{definition}
Let $\alpha\in \Delta$ be a non-zero root and $n\in \ZZ$.
The affine plane
$$  \{H\in\mathfrak{t} \,|\, \alpha(H) = 2\pi i n\}  \subseteq \mathfrak{t}  $$
is called the \textit{singular plane} $(\alpha,n)$.
\end{definition}

If $p=(\alpha,n)$ is a singular plane, we write $\overline{p}$ for its image $\exp(p)\subseteq G$ under the Lie group exponential of $G$.
Let $G(p)$ be the centralizer of $\overline{p}$, i.e. we have
$$  G(p) = \{   g\in G\,|\, gh = hg \,\,\text{for all} \,\, h\in\overline{p}\} .   $$
The group $G(p)$ is a closed subgroup of $G$.

Clearly, the torus $T$ is itself always a closed subgroup of $G(p)$ and one can show that for all singular planes $p$ the dimension of $G(p)$ is strictly larger than $r$ (see e.g. \cite[Lemma VII.4.5]{helgason:78}).

Let $P=(p_1,p_2,\ldots,p_m)$ be an ordered family of singular planes, where $m\in \mathbb{N}$.
Set 
$$  W(P) = G(p_1)\times G(p_2)\times\ldots \times G(p_m)   .   $$
Following \cite{bott:1958b}, there is a right-action $\chi$ of the $m$-fold product of the maximal torus $T$ on $W(P)$ given by
\begin{eqnarray*}    \chi : W(P) \times T^m &\to& W(P)      \\
                ( (k_1,\ldots,k_m), (t_1,\ldots,t_m)) &\mapsto& (k_1 t_1,t_1^{-1} k_2 t_2,\ldots, t_{m-1}^{-1} k_m t_m)  .
\end{eqnarray*}
Note that $T^m$ is a closed subgroup of $W(P)$ but the action above is not the one induced by the group multiplication in $W(P)$.
One can check that the action $\chi$ is proper and free and consequently the quotient
$$ \Gamma_P = W(P) / T^m    $$
is a manifold.
Bott and Samelson show in \cite{bott:1958a} that all $\Gamma_P$ are connected and oriented.

We now construct a continuous map $f_P : \Gamma_P \to \Omega G$.
Let $c = (c_0,c_1,\ldots,c_m)$ be an ordered sequence of polygons in $\mathfrak{t}$ with the following property:
\begin{itemize}
    \item The polygon $c_0$ starts at the origin $0\in\mathfrak{t}$.
    \item For $i=1,\ldots,m$ the endpoint of the polygon $c_{i-1}$ is the start point of the polygon $c_i$ and lies on the singular plane $p_i$.
    \item The polygon $c_m$ ends at the origin $0\in\mathfrak{t}$.
\end{itemize}
For all $i=0,1,\ldots,m$, we can parametrize the polygon $c_i$ on the interval $[\frac{i}{m+1},\frac{i+1}{m+1}]$.
Then we define a map $\widetilde{f}_P : W(P) \to \Omega G$ by setting
$$   \widetilde{f}_P(g_1,g_2,\ldots,g_m) = \mathrm{concat}\Big[  \exp(c_0), g_1\exp(c_1)g_1^{-1}, \ldots,g_1\ldots g_m\exp(c_m)g_m^{-1}\ldots g_1^{-1}    \Big].     $$
By the choice of the polygons, this is a well-defined continuous map. 
One checks that $\widetilde{f}_P$ is invariant under the right-action of $T^m$ by $\chi$, so it descends to a map
$$  f_P : \Gamma_P \to \Omega G .    $$
If we choose an orientation class $[\Gamma_P]\in\mathrm{H}_{\bullet}(\Gamma_P)$ of the manifold $\Gamma_P$, we obtain a homology class
$$  P_* = (f_P)_*[\Gamma_P] \in \mathrm{H}_{\bullet}(\Omega G) .      $$
Bott and Samelson \cite{bott:1958a} have shown that the set
\begin{equation} \label{eq_generating_sing_planes}
  \{ P_*\in\mathrm{H}_{\bullet}(\Omega G)\,|\,P = (p_1,\ldots,p_m) \,\,\,\text{is an ordered family of singular planes}, \,\,\, m\geq 1\}     
\end{equation}
together with a generator of $\mathrm{H}_0(\Omega G)$
generates the homology of $\Omega G$ if $G$ is simply connected.
Hence, the image in $\mathrm{H}_{\bullet}(\Omega G,e)$ of the set in equation \eqref{eq_generating_sing_planes} generates the homology $\mathrm{H}_{\bullet}(\Omega G, e)$.

Note that the map $f_P$ depends on the choice of the polygons $(c_0,\ldots,c_m)$. 
However, assume that we make a different choice, i.e. we choose an ordered sequence of polygons $(d_0,\ldots,d_m)$ that satisfy the same conditions with respect to the family of singular planes $P$.
Then the map $\widehat{f}_P$ which is induced by the polygons $(d_0,\ldots,d_m)$ is homotopic to the map $f_P$,
since we can continuously deform each $d_i$ into $c_i$ and thus obtain the desired homotopy from $\widehat{f}_P$ to $f_P$.

Let $\mathcal{F}$ be the lattice
$$ \mathcal{F} = \{H\in\mathfrak{t}\,|\, \exp(H) = e\}    $$
where $\exp$ is the Lie group exponential of $G$ and $e\in G$ is the unit element.
Recall that the dimension of $\mathfrak{t}$ is equal to the rank $r$.

\begin{definition}
Let $P = (p_1,\ldots,p_m)$ be an ordered family of singular planes.
We say that an ordered family of polygons $(c_0,c_1,\ldots,c_m)$ is \textit{lattice-nonintersecting} if the following holds: No polygon $c_i$ intersects the lattice $\mathcal{F}$ apart from $c_0$ at its start point and $c_m$ at its endpoint.
\end{definition}

In case the rank $r$ of the Lie group $g$ satisfies $r\geq 2$, it is clear that given an ordered family of singular planes $P=(p_1,\ldots,p_m)$ we can choose the polygons $(c_0,c_1,\ldots,c_m)$ to satisfy the conditions with respect to the endpoints of the polygons and to be lattice-nonintersecting at the same time.
This choice is not possible however if  the rank is $r=1$.

 \begin{prop} \label{prop_k_cycl_int}
 Let $G$ be a compact, simply connected Lie group of rank $r\geq 2$.
Then every homology class in $\mathrm{H}_{\bullet}(\Lambda G, G)$ has basepoint intersection multiplicity $1$.
 \end{prop}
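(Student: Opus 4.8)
The plan is to prove the two bounds $\mathrm{int}(X)\geq 1$ and $\mathrm{int}(X)\leq 1$ separately for every non-trivial class $X\in\mathrm{H}_\bullet(\Lambda G,G)$. The lower bound holds for every oriented closed manifold and uses no hypothesis on $G$: any absolute cycle $A\in\mathrm{C}_\bullet(\Lambda G)$ representing a non-trivial relative class cannot be supported entirely on constant loops, so its support contains a loop $\gamma$ with $\mathcal{L}(\gamma)>0$, and such a loop satisfies $\gamma(0)=\gamma(1)$ with $0\neq 1$, whence $\#(\gamma^{-1}(\{\gamma(0)\}))\geq 2$. Taking the infimum over $A$ and subtracting $1$ gives $\mathrm{int}(X)\geq 1$. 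All the work therefore goes into the upper bound $\mathrm{int}(X)\leq 1$, for which it suffices to produce, for each $X$, one representing cycle whose positive-length loops meet their own basepoint at most twice.

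First I would reduce to the Bott--Samelson generators. By the splitting isomorphism $\Theta_*$ of Theorem \ref{theorem_splitting}, every class in $\mathrm{H}_\bullet(\Lambda G, G)$ is a finite sum of classes $\Theta_*(x\otimes\xi)=\Phi_*(x\times\xi)$ with $x\in\mathrm{H}_\bullet(G)$ and $\xi\in\mathrm{H}_\bullet(\Omega G, e)$, and by the theorem of Bott and Samelson the factors $\xi$ may be taken among the generators $P_*=(f_P)_*[\Gamma_P]$ attached to ordered families of singular planes $P=(p_1,\ldots,p_m)$. Representing $x$ by a cycle $\sigma$ in $G$, the class $\Theta_*(x\otimes P_*)$ is represented by the absolute cycle $\Phi_*(\sigma\times f_P([\Gamma_P]))$, whose support consists of the free loops $t\mapsto g\gamma(t)$ with $g\in\mathrm{Im}(\sigma)$ and $\gamma\in\mathrm{Im}(f_P)$. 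Such a loop is based at $g$, and $g\gamma(t)=g$ holds precisely when $\gamma(t)=e$, so that $\#((g\gamma)^{-1}(\{g\}))=\#(\gamma^{-1}(\{e\}))$. Because the support of a finite sum of such cycles lies in the union of the individual supports, and because modifying a representing cycle by chains of constant loops does not affect $\mathrm{int}$, it is enough to bound $\#(\gamma^{-1}(\{e\}))$ for a single Bott--Samelson loop $\gamma=\widetilde{f}_P(g_1,\ldots,g_m)$.

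The decisive step, and the one I expect to be the main obstacle, is to count how often a Bott--Samelson loop returns to the unit element $e$. Since $r\geq 2$, I may choose the defining polygons $(c_0,\ldots,c_m)$ to be lattice-nonintersecting; as observed in the text above this leaves the homotopy class of $f_P$, and hence the class $P_*$, unchanged. On the $i$-th piece of the concatenation the value of the loop is $h\exp(c_i(s))h^{-1}$ with $h=g_1\cdots g_i$, and since conjugation fixes $e$ this equals $e$ exactly when $\exp(c_i(s))=e$, that is, when $c_i(s)\in\mathcal{F}$. By the lattice-nonintersecting condition the polygons meet $\mathcal{F}$ only at the start point of $c_0$ and the endpoint of $c_m$, both equal to the origin, so $\gamma(t)=e$ can occur only for $t\in\{0,1\}$; in particular the junction points, which lie on the singular planes but not on $\mathcal{F}$, contribute nothing. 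Consequently every positive-length loop in the support of the chosen cycle meets its basepoint exactly twice, the supremum in the definition of $\mathrm{int}$ equals $2$, and $\mathrm{int}(X)\leq 1$. Together with the lower bound this yields $\mathrm{int}(X)=1$ for every non-trivial class, as claimed.
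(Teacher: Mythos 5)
Your proposal is correct and follows essentially the same route as the paper: reduce via the K\"unneth splitting $\Phi_*(x\times P_*)$ to the Bott--Samelson generators, use the rank hypothesis $r\geq 2$ to choose lattice-nonintersecting polygons, and count that each positive-length loop in the resulting cycle meets its basepoint exactly twice. The only difference is cosmetic: you make explicit two steps the paper leaves implicit, namely the lower bound $\mathrm{int}(X)\geq 1$ and the passage from generators to arbitrary classes via unions of supports.
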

 \begin{proof}
By the arguments from Section \ref{sec3} the homology of the free loop space of $G$ relative to the constant loops is isomorphic to the tensor product of the homology of $G$ and of the homology of the based loop space of $G$ relative to the basepoint via the maps
$$   (\mathrm{H}_{\bullet}(G)\otimes \mathrm{H}_{\bullet}(\Omega G,e))_i  \xrightarrow[]{\times }  \mathrm{H}_i (G\times \Omega G,G\times e) \xrightarrow[]{\Phi_*} \mathrm{H}_i(\Lambda G,G)   .   $$
Choose classes $[x_1],\ldots,[x_l] \in\mathrm{H}_{\bullet}(G)$ that generate the homology of $G$.
Then by the considerations before this proposition and the above isomorphism, it is clear that the set
$$  \{ \Phi_* ([x_i]\times P_*)\,|\, i\in\{1,\ldots,l\},\,\,\,P \,\,\text{non-trivial ordered family of singular planes}\}        $$
generates the homology of $\Lambda G$ relative to the trivial loops $G$.

Let $i\in\{1,2,\ldots,l\}$ and let $P$ be a non-trivial ordered family of singular planes.
We want to determine the intersection multiplicity of the homology class $\Phi_*([x_i]\times P_*)$.
Choose an arbitrary representative $x_i$ of $[x_i]$ and a representative $\mathcal{P}$ of $P_*$ that is obtained via a lattice-nonintersecting polygon.
This is possible, since the rank $r$ is greater than or equal to $2$.

A loop $\gamma\in \mathrm{Im}(\Phi_*(x_i \times \mathcal{P} ) )$ is of the form
$$  \gamma = k \cdot \mathrm{concat}\Big[  \exp(c_0), g_1\exp(c_1)g_1^{-1}, \ldots,g_1\ldots g_m\exp(c_m)g_m^{-1}\ldots g_1^{-1}    \Big]  .  $$
Clearly, we have $\gamma(0)=\gamma(1) = k$.
Assume that there is an $s\in(0,1)$ with $\gamma(s) = k$.
Then there is an $i\in\{0,\ldots,m\}$ with $\exp(c_i (s)) = e$ which implies that $c_i(s)\in\mathcal{F}$.
But by the choice of a lattice-nonintersecting polygon, this is a contradiction to $s\in(0,1)$ which proves that $\# (\gamma^{-1}(k)) = 2$.
We conclude that
$$  \mathrm{int}(\Phi_*([x_i]\times P_*)) = 1 .  $$
Since the homology of the free loop space $\Lambda G$ relative to the constant loops $G$ is generated by homology classes of the form that we considered, this shows that every class in $\mathrm{H}_{\bullet}(\Lambda G,G)$ has basepoint intersection multiplicity $1$.
 \end{proof}
 \begin{theorem} \label{theorem_rank}
 Let $G$ be a simply connected, compact Lie group of rank $r \geq 2$.
 Then the free coproduct $\cpr$ is trivial.
 \end{theorem}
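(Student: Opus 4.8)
The plan is to combine the two immediately preceding results, as the substantive work has already been done. By Proposition \ref{prop_k_cycl_int}, every non-trivial homology class $X \in \mathrm{H}_{\bullet}(\Lambda G, G)$ has basepoint intersection multiplicity $\mathrm{int}(X) = 1$, since $G$ is compact, simply connected, and of rank $r \geq 2$. On the other hand, Proposition \ref{prop_int_triv} (the Hingston--Wahl vanishing criterion) guarantees that the free coproduct vanishes on any non-trivial class whose basepoint intersection multiplicity equals $1$. Feeding the output of the former into the hypothesis of the latter yields $\cpr X = 0$ for every non-trivial class $X$.

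To upgrade this pointwise vanishing to the statement that $\cpr$ is the trivial map, I would observe that the coproduct is linear and is tautologically zero on the zero class, so establishing $\cpr X = 0$ for all non-trivial $X$ already exhausts every element of $\mathrm{H}_{\bullet}(\Lambda G, G)$. Hence $\cpr \equiv 0$, which is the assertion of the theorem.

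The only point requiring any care is the interface between the two propositions: the basepoint intersection multiplicity is defined only for non-trivial homology classes, so the phrase ``every homology class has basepoint intersection multiplicity $1$'' in Proposition \ref{prop_k_cycl_int} must be read as a statement about non-trivial classes. Since the coproduct vanishes on the zero class by definition, this causes no difficulty. I do not anticipate any genuine obstacle in carrying out this argument: all the real content lies upstream, namely in the construction of the Bott--Samelson cycles together with the lattice-nonintersecting polygons underlying Proposition \ref{prop_k_cycl_int}, and in the vanishing criterion of Proposition \ref{prop_int_triv}. The proof of the theorem itself is therefore a short assembly of these ingredients.
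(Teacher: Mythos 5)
Your proof is correct and coincides with the paper's own argument, which likewise deduces Theorem \ref{theorem_rank} by combining Proposition \ref{prop_int_triv} with Proposition \ref{prop_k_cycl_int}. Your extra remarks on non-trivial classes and linearity are fine but not needed beyond what the paper states.
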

 \begin{proof}
 This follows from Propositions \ref{prop_int_triv} and \ref{prop_k_cycl_int}.
 \end{proof}
 \begin{remark}
 \begin{enumerate}
 \item The condition on the rank in Theorem \ref{theorem_rank} cannot be given up.
 The $3$-sphere $\mathbb{S}^3$ can be seen as the Lie group $\mathrm{SU}(2)$ which is a compact, simply connected Lie group of rank $1$. The free coproduct on $\mathbb{S}^3$ is non-trivial (see \cite[Proposition 3.17]{hingston:2017} ).
 \item Classical examples of Lie groups to which Theorem \ref{theorem_rank} applies are the special unitary groups $\mathrm{SU}(n)$ for $n\geq 3$, the Spin groups $\mathrm{Spin}(n)$ for $n\geq 4$ and the compact symplectic groups $\mathrm{Sp}(n)$ for $n\geq 2$.
 \end{enumerate}
 \end{remark}

 \subsection*{Acknowldegements}
 The author wants to thank Philippe Kupper and Stephan Mescher for many helpful comments on this manuscript.

\bibliography{lit}
 \bibliographystyle{amsalpha}
 
\end{document}